\documentclass[10pt]{article}
\usepackage[utf8]{inputenc}
 


\usepackage{amssymb}
\usepackage{amsmath}
\usepackage{amsthm}
\usepackage{url}
\usepackage{mathrsfs}
\usepackage{stmaryrd}
\usepackage{enumitem}
\usepackage{mathtools}



\newtheorem{prop}{Proposition}[section]
\newtheorem{lem}{Lemma}[section]

\newtheorem{theor}{Theorem}[section]
\newtheorem{mytheor}{Theorem}

\newtheorem{cor}{Corolary}[section]
\newtheorem{conj}{Conjecture}[section]

\newcommand{\Uni}{\mathcal{U}}

\newcommand{\Z}{\mathbb{Z}}
\newcommand{\Q}{\mathbb{Q}}
\newcommand{\C}{\mathbb{C}}
\newcommand{\T}{\mathbb{T}}

\newcommand{\bop}{\mathcal{B}}

\newcommand{\Hi}{\mathcal{H}}

\newcommand{\bp}{\bar{\pi}}

\DeclareMathOperator*{\esssup}{ess\,sup}

\DeclareMathOperator{\Aut}{Aut}
\DeclareMathOperator{\Rep}{Rep}
\DeclareMathOperator{\QRep}{QRep}

\DeclareMathOperator{\Id}{Id}

\DeclareMathOperator{\rp}{rp}
\DeclareMathOperator{\en}{en}
\DeclareMathOperator{\df}{df}

 \begin{document}

\author{Andrei Alpeev  \footnote{Einstein Institute of Mathematics, Edmond J. Safra Campus (Givat Ram), The Hebrew University, Jerusalem 91904, Israel} \footnote{Euler Mathematical Institute at St. Petersburg State University alpeevandrey@gmail.com}}
\title{Lamplighters over non-amenable groups are not strongly Ulam stable}


\maketitle
\begin{abstract}
\end{abstract}

\section{Introduction}

Let $G$ be a countable group. A representation of $G$ is any homomorphism of this group into the group of invertible bounded operators on a Hilbert space. A representation is called unitary if the range of this homomorphism is a subset of the group of unitary operators. We will say that a representation is {\em unitarizable} if there is another scalar product on the Hilbert space that would generate the same topology and that the representation is unitary with respect to this new Hilbert structure. For our purposes it would be convenient to give an equivalent definition. A representation $\pi$ of a countable group $G$ on a Hilbert space $\Hi$ is unitarizable if there is a bounded operator $B$ on $\Hi$ and a constant $C > 0$ such that $(\pi(g))^* B \pi(g) = B$ for all $g \in G$, and $\langle B v , v\rangle \geq C \lVert v\rVert^2$ for all $v \in \Hi$. A representation $\pi$ of a countable group $G$ is called uniformly bounded if there is a constant $K$ such that $\lvert \pi(g) \rvert < K$ for all $g \in G$. A countable group is called {\em unitarizable} if each of its uniformly bounded representations is unitarizable. 

These natural definitions rise the question: which countable groups are unitarizable. Early on, in the 50's, the following answer was proposed by Dixmier \cite{D50}:
\begin{conj}

A countable group is unitarizable iff it is amenable.

\end{conj}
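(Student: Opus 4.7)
The plan is to treat the two implications separately. The \emph{amenable $\Rightarrow$ unitarizable} direction is classical and I would prove it first. Given a uniformly bounded representation $\pi$ of an amenable countable group $G$ with $\|\pi(g)\|\le K$, pick a left-invariant mean $m$ on $\ell^{\infty}(G)$ and set
\[
\langle v,w\rangle_{\mathrm{new}} \;=\; m\bigl(g\mapsto \langle \pi(g)v,\pi(g)w\rangle\bigr).
\]
The estimate $\|\pi(g)v\|\ge K^{-1}\|v\|$ (from inverting $\pi(g)$) together with $\|\pi(g)v\|\le K\|v\|$ gives $K^{-2}\|v\|^{2}\le\langle v,v\rangle_{\mathrm{new}}\le K^{2}\|v\|^{2}$, so the form is positive definite and induces an equivalent norm, while left-invariance of $m$ forces it to be $\pi$-invariant. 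The positive operator $B$ representing $\langle\cdot,\cdot\rangle_{\mathrm{new}}$ via the Riesz lemma then satisfies all the conditions in the operator definition of unitarizability given in the introduction.

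The converse, \emph{unitarizable $\Rightarrow$ amenable}, equivalently \emph{non-amenable $\Rightarrow$ non-unitarizable}, is the notoriously open half. The natural high-level strategy is: (i) verify that unitarizability descends to subgroups (restrict the bad representation) and to quotients (pull it back); (ii) for every non-amenable $G$, exhibit a subgroup or quotient already known to be non-unitarizable. The available witnesses are groups containing $F_{2}$ (Ehrenpreis-Mautner, Pytlik-Szwarc, Bo\.zejko), certain surface groups (Kazhdan), and, in view of the main theorem announced in this paper's abstract, wreath products $A\wr H$ with $A$ abelian and $H$ non-amenable.

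The core obstacle, and the reason the conjecture remains open, is the existence of non-amenable groups without non-abelian free subgroups (Ol'shanskii, Adyan, and later monster constructions); these escape every reduction based on finding a copy of $F_{2}$. For such $G$ the only viable route seems to be an intrinsic construction of a uniformly bounded non-unitarizable representation directly out of the F\o lner failure of $G$. A promising framework is bounded cohomology: non-unitarizability can often be read off from non-vanishing of a suitable second bounded cohomology class with uniformly bounded coefficients, so one would try to manufacture such a class from a paradoxical decomposition of $G$. The hard part---where every known approach has stalled---is to keep the representation uniformly bounded while simultaneously obstructing the existence of an invariant equivalent inner product; the present paper's lamplighter construction is a substantial instance of exactly this problem, and a natural next step in the program would be to embed an arbitrary non-amenable group into such a lamplighter via an orbit- or measure-equivalence argument, thereby transferring the non-unitarizability to the general case.
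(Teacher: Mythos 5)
The statement you were asked to prove is Dixmier's conjecture, which the paper states precisely as a \emph{conjecture}: it is an open problem, and the paper offers no proof of it (the whole point of the paper is to enlarge the class of non-amenable groups known to be non-unitarizable). Your response is essentially the right one. Your proof of the ``amenable $\Rightarrow$ unitarizable'' direction is the classical averaging argument of Dixmier, Day, and Nakamura--Takeda, exactly the references the paper cites for that implication, and it is correct: the two-sided bound $K^{-2}\lVert v\rVert^{2}\le\langle v,v\rangle_{\mathrm{new}}\le K^{2}\lVert v\rVert^{2}$ does follow from $\lVert v\rVert=\lVert\pi(g^{-1})\pi(g)v\rVert\le K\lVert\pi(g)v\rVert$, and the operator $B$ obtained from the Riesz representation satisfies the conditions in the paper's operator-theoretic definition of unitarizability. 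One small point of hygiene: with your convention $\langle\pi(h)v,\pi(h)w\rangle_{\mathrm{new}}=m\bigl(g\mapsto\langle\pi(gh)v,\pi(gh)w\rangle\bigr)$, the function being averaged is a \emph{right} translate, so you should either take a right-invariant (or bi-invariant) mean or average over $g\mapsto\langle\pi(g^{-1})v,\pi(g^{-1})w\rangle$; since amenability supplies bi-invariant means this is cosmetic, not a gap. Your discussion of the converse is accurate: it reduces to exhibiting a uniformly bounded non-unitarizable representation of every non-amenable group, the known witnesses ($F_{2}$ and its overgroups, certain lamplighters via Monod--Ozawa and the present paper) do not cover the Ol'shanskii-type groups, and no complete argument exists. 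You correctly refrain from claiming one. The only caution is that your closing sentence about ``embedding an arbitrary non-amenable group into such a lamplighter'' inverts the direction in which unitarizability passes (it descends to subgroups, so one needs the non-unitarizable witness \emph{inside} $G$, not $G$ inside a witness); the measured-group-theoretic substitute for this, via Gaboriau--Lyons orbit inclusions, is precisely the route the paper takes for its partial results.
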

The ``if'' direction was handled by Dixmier in the same paper, by M. Day \cite{Da50}, and by Nakamura and Takeda \cite{NT51}.
The other direction proved to be much harder.
It was shown that the free group $F_2$ on two generators is not unitarizable(see e.g. \cite{P04}). 

\begin{theor}

The free group $F_2$ has a uniformly bounded non-unitarizable representation on a separable Hilbert space.

\end{theor}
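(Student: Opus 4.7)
The plan is to construct an explicit uniformly bounded non-unitarizable representation of $F_2$ following the Pytlik--Szwarc strategy. Identify $F_2$ with the vertex set of its Cayley graph $T$, a $4$-regular tree rooted at the identity $e$. For each complex parameter $z$ in the open unit disk we define a representation $\pi_z \colon F_2 \to B(\ell^2(F_2))$ given by an explicit formula in terms of reduced-word cancellation in $T$: schematically, $\pi_z(g)\delta_h$ equals $\delta_{gh}$ on vectors supported away from the geodesic $[e,g]$, plus correction terms of geometric size in powers of $z$ coming from the vertices where the geodesic from $e$ to $g$ and from $g$ to $gh$ overlap. At $z=0$ this degenerates to a direct sum of the trivial representation and the regular representation on a complement, so the construction is a deformation of a unitary representation in an analytic family.

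The first step is to verify that $g \mapsto \pi_z(g)$ is a homomorphism: since both the product law in $F_2$ and the formula defining $\pi_z$ are controlled entirely by reduced-word cancellation, this reduces to a finite combinatorial check on how pairs of geodesics in $T$ compose at their common segment. The second step is to prove the uniform bound $\|\pi_z(g)\| \le C(|z|)$ independent of $g$. The key idea is to decompose the operator $\pi_z(g) - \lambda(g)$, where $\lambda$ is the regular representation, as a finite sum of block operators indexed by vertices of the geodesic $[e,g]$, each of norm at most $|z|^k$ for the corresponding distance $k$, and to apply a block-wise Schur test followed by summation of a geometric series in $|z|$.

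The main obstacle is the third step, showing that $\pi_z$ fails to be unitarizable for $|z|$ sufficiently close to $1$. I would compute the diagonal matrix coefficient $\varphi_z(g) = \langle \pi_z(g) \delta_e, \delta_e \rangle$ and recognise it as a radial function on $T$ whose value on any reduced word of length $n$ is essentially $z^n$. If $\pi_z$ were unitarizable, $\varphi_z$ would be a positive-definite function on $F_2$. The classification of radial positive-definite functions on the free group, obtained by Haagerup and Cartwright--Soardi via analysis of spherical functions on the homogeneous tree, forces such functions into a restricted parameter range which the map $n \mapsto z^n$ escapes once $|z|$ is large enough. This classification is precisely where the non-amenability of $F_2$ enters, since for amenable groups every analogously constructed coefficient is automatically positive-definite.
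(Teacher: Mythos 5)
The paper offers no proof of this statement; it simply cites Pisier's book, so your proposal can only be judged on its own terms. Steps one and two (the Pytlik--Szwarc analytic family $\pi_z$ and its uniform boundedness via a tree-geodesic decomposition and a Schur test) are sound and standard. The gap is in step three, and it is twofold. First, the implication ``$\pi_z$ unitarizable $\Rightarrow$ $\varphi_z(g)=\langle \pi_z(g)\delta_e,\delta_e\rangle$ is positive definite'' is false: if $S\pi_z(g)S^{-1}=u(g)$ is unitary, then $\varphi_z(g)=\langle u(g)\,S\delta_e,(S^{-1})^*\delta_e\rangle$ is a coefficient of a unitary representation with respect to two \emph{different} vectors, hence an element of the Fourier--Stieltjes algebra $B(F_2)$, but not a positive definite function in general. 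Second, even if that implication held, the contradiction you aim for does not exist: for real $z\in(0,1)$ the radial function $g\mapsto z^{|g|}$ \emph{is} positive definite on $F_2$ for every such $z$ --- this is precisely Haagerup's theorem (the witness of the Haagerup property for free groups) --- so the function never ``escapes the positive-definite range'' as $|z|\to 1$. For non-real $z$ the function $z^{|g|}$ fails positive definiteness trivially (Hermitian symmetry $\varphi(g^{-1})=\overline{\varphi(g)}$ already fails), but by the first point this tells you nothing about unitarizability.

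Two standard ways to close the gap. (i) The route in Pisier's book avoids matrix coefficients entirely: one takes a bounded non-inner derivation $D$ into $B(\ell^2(F_2))$ built from the tree (e.g.\ from the operator counting occurrences of a generator along the geodesic $[e,g]$), forms the triangular representation $g\mapsto\bigl(\begin{smallmatrix}\lambda(g) & D(g)\lambda(g)\\ 0 & \lambda(g)\end{smallmatrix}\bigr)$, and shows it is uniformly bounded because $D$ is bounded and non-unitarizable because $D$ is not inner. (ii) If you want to stay with $\pi_z$, you must take $z$ non-real, invoke Szwarc's irreducibility of $\pi_z$, and then show that $z^{|g|}$ is not a coefficient of any \emph{unitary} representation, i.e.\ does not lie in $B(F_2)$; this is a genuinely harder statement than failure of positive definiteness and requires the spherical-function analysis of the principal series on the tree, not the classification of radial positive definite functions. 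As written, your step three would not survive scrutiny.
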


For some more recent results on non-unitarizable groups see \cite{EM09}, \cite{S15}, \cite{Geta20}. 

Using an induced representation construction it is easy to prove the following well-known result:

\begin{prop}\label{prop: subgoups of unitarizable}

A subgroup of a unitarizable group is unitarizable.

\end{prop}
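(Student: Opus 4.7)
The plan is to start with a uniformly bounded representation $\pi$ of the subgroup $H \leq G$ on a Hilbert space $\Hi$, induce it up to a uniformly bounded representation $\tilde\pi$ of $G$, invoke the unitarizability of $G$ to obtain an invariant positive operator for $\tilde\pi$, and then compress this operator back to $\Hi$ to witness the unitarizability of $\pi$.

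Concretely, I would fix a transversal $T \subset G$ for the left cosets $G/H$ with $e \in T$, and realize the induced representation on $\ell^2(T, \Hi)$ via a formula of the form $(\tilde\pi(g) f)(t) = \pi(c(g,t)) f(g^{-1} \cdot t)$, where $g \cdot t \in T$ is the transversal element representing the coset $gtH$ and $c(g,t) \in H$ is the associated cocycle. Because $\lVert \pi(h)\rVert \leq K$ uniformly in $h$, a coset-wise computation of norms gives $\lVert \tilde\pi(g)\rVert \leq K$ uniformly in $g$, so $\tilde\pi$ is itself uniformly bounded.

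By unitarizability of $G$ there exists a bounded operator $\tilde B$ on $\ell^2(T, \Hi)$ with $\tilde\pi(g)^* \tilde B \tilde\pi(g) = \tilde B$ for every $g \in G$ and $\langle \tilde B w, w\rangle \geq C \lVert w\rVert^2$. Now consider the isometric embedding $\iota : \Hi \to \ell^2(T, \Hi)$ sending $v$ to the function supported at $e \in T$ with value $v$. The key observation is that, under the standard conventions, $H$ stabilizes the coset $eH$ and $c(h,e) = h$, so $\iota \circ \pi(h) = \tilde\pi(h) \circ \iota$ for every $h \in H$; in other words, $\iota$ intertwines $\pi$ with $\tilde\pi\vert_H$.

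Setting $B := \iota^* \tilde B \iota$, the intertwining relation and the invariance of $\tilde B$ give
\[
\pi(h)^* B \pi(h) = \iota^* \tilde\pi(h)^* \tilde B \tilde\pi(h) \iota = \iota^* \tilde B \iota = B,
\]
while positivity is inherited directly, $\langle B v, v\rangle = \langle \tilde B \iota(v), \iota(v)\rangle \geq C \lVert v\rVert^2$. This exhibits $\pi$ as unitarizable, completing the argument. The only step requiring real care is the correct bookkeeping for the induced representation and the verification of its uniform boundedness; once the intertwining embedding $\iota$ is in place, the extraction of $B$ from $\tilde B$ is purely formal.
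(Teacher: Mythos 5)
Your proposal is correct and is exactly the induced-representation argument the paper alludes to (it gives no details, saying only that the result follows ``using an induced representation construction''): induce $\pi$ to a uniformly bounded representation of $G$, pull back the invariant positive operator through the isometry onto the fiber over the identity coset. The only blemish is the cocycle indexing --- for $\tilde\pi$ to be a homomorphism the formula should read $(\tilde\pi(g)f)(t)=\pi\bigl(c(g,g^{-1}\cdot t)\bigr)f(g^{-1}\cdot t)$ rather than $\pi(c(g,t))f(g^{-1}\cdot t)$ --- but this is precisely the bookkeeping you flagged, and it does not affect the intertwining at $e$ or the rest of the argument.
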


So, if only the von Neumann--Day conjecture (stating that all non-amenable groups contain a copy of a non-abelian free subgroup) was true, the Dixmier problem would be solved, but it is known from the work of Olshanski \cite{Ol91} that there are non-amenable groups that do not contain non-abelian free subgroups. While the von Neumann--Day conjecture is not true in its original form, its ``measured'' counterpart holds true. We would say that a group $G$ orbitally contains a group $N$ if there is a standard probability space $(X, \mu)$ and essentially free measure preserving actions of $G$ and $N$ on $X$ such that the orbit of almost every point in $X$ under the $N$-action  is a subsets of the orbit under the $G$-action. We would call such a pair of actions an {\em orbit inclusion} of $N$ into $G$. We remind that a Bernoulli action of a countable group $G$ is a shift-action of $G$ on the product space $(K, \kappa)^G = (\prod_{g\in G} K, \bigotimes_{g \in G} \kappa)$, where $(K, \kappa)$ is a  standard probability space.

\begin{theor}[Gaboriau-Lyons, \cite{GL09}]

Any non-amenable countable group $G$ orbitally contains the free group $F_2$ on two generators. In fact, the action of $G$ for the orbit inclusion could be taken to be the Bernoulli action $G \curvearrowright [0,1]^G$, where $[0,1]^G$ is endowed with the product of standard Lebesgue measures on $[0,1]$.

\end{theor}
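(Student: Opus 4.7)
The plan is to use Bernoulli bond percolation on the Cayley graph of $G$ to manufacture a Borel sub-equivalence relation of the $G$-orbit relation on $[0,1]^G$, and then to invoke Hjorth's theorem on treeable equivalence relations of cost greater than $1$ to extract a genuine essentially free $F_2$-subrelation. Fix a finite symmetric generating set $S$ and let $\Gamma = \Gamma(G,S)$. Since any two standard nontrivial Bernoulli $G$-spaces are measurably $G$-isomorphic, we may realize inside $G \curvearrowright [0,1]^G$ a Bernoulli $p$-bond percolation configuration $\omega \in \{0,1\}^{E(\Gamma)}$ for any chosen $p \in (0,1)$. Its cluster partition yields a Borel sub-equivalence relation $\mathcal{R}_\omega \subset \mathcal{R}_G$ of the $G$-orbit equivalence relation, and $G$-equivariance of the construction makes $\mathcal{R}_\omega$ a measured equivalence relation for the Bernoulli measure.

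Next, tune $p$. By the theorems of Pak--Smirnova-Molodenskaya and Benjamini--Lyons--Peres--Schramm, non-amenability of $G$ forces $\Gamma$ to admit a non-empty non-uniqueness phase $(p_c, p_u) \subset (0,1)$ in which Bernoulli percolation almost surely produces infinitely many infinite clusters. Pick $p$ in this window and pass to the $G$-invariant set $A \subset [0,1]^G$ of configurations sitting in an infinite cluster, which has positive measure. The restricted relation $\mathcal{R}_\omega^\infty := \mathcal{R}_\omega|_A$ is then:
\begin{enumerate}[label=(\roman*)]
\item ergodic, by tail-triviality of Bernoulli percolation combined with $G$-ergodicity of the ambient shift;
\item treeable, since the wired minimal spanning forest of $\Gamma$ restricted to each infinite cluster provides a Borel treeing;
\item of cost strictly greater than $1$, by Gaboriau's cost formulas for cluster equivalence relations on non-amenable Cayley graphs, after possibly shrinking the choice of $p$ within $(p_c, p_u)$.
\end{enumerate}

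Hjorth's theorem now applies: an ergodic p.m.p.\ treeable equivalence relation of cost $>1$ contains, as a sub-equivalence relation, the orbit equivalence relation of an essentially free p.m.p.\ action of $F_2$. This supplies a free measure-preserving $F_2$-action on $(A, \mu|_A)$ whose every orbit sits inside a $G$-orbit. A final spreading step — choose a Borel map $\sigma \colon [0,1]^G \setminus A \to G$ with $\sigma(x)\cdot x \in A$, and transport the $F_2$-action along $\sigma$ — produces an essentially free p.m.p.\ action of $F_2$ on all of $[0,1]^G$ whose orbits remain inside the $G$-orbits, i.e.\ the desired orbit inclusion.

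The principal obstacle is the percolation-theoretic input: establishing the non-emptiness of the non-uniqueness phase on an arbitrary non-amenable Cayley graph, which is the deep contribution of Pak--Smirnova-Molodenskaya and Benjamini--Lyons--Peres--Schramm. The second crucial input, Hjorth's theorem, is precisely the technology that bridges the gap created by Olshanski's counterexamples to the original von Neumann--Day conjecture: treeability and cost $>1$ are measured-theoretic surrogates strong enough to force a free $F_2$ inside the equivalence relation, even though no non-abelian free subgroup of $G$ need exist.
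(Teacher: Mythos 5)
The paper does not prove this statement: it is quoted verbatim from Gaboriau--Lyons \cite{GL09} and used as a black box, so there is no internal proof to compare against. Your sketch is, in architecture, exactly the published Gaboriau--Lyons argument (percolation in the non-uniqueness phase, cluster relations, treeability and cost, Hjorth's theorem, then spreading from a positive-measure set to the whole space), so the route is the right one. Three points, however, are misstated in ways that matter.

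First, the non-emptiness of the non-uniqueness phase $(p_c,p_u)$ for an \emph{arbitrary} Cayley graph of a non-amenable group is an open conjecture of Benjamini--Schramm, not a theorem; Pak and Smirnova-Molodenskaya only prove that \emph{some} generating set (obtained by passing to a large power of a given one, with multiplicities) yields a Cayley graph with $p_c<p_u$, via the spectral-radius bound $p_u\geq 1/\rho$. So you cannot ``fix a finite symmetric generating set'' at the outset; the generating set must be chosen as part of the proof. Second, ergodicity of the cluster relation restricted to the infinite-cluster part does not follow from tail-triviality of the Bernoulli field together with ergodicity of the shift; it requires the Lyons--Schramm indistinguishability theorem for infinite clusters of insertion-tolerant invariant percolations. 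Third, the final ``spreading'' step as you describe it does not work: a Borel selector $\sigma:X\setminus A\to G$ with $\sigma(x)x\in A$ is not injective, so ``transporting the $F_2$-action along $\sigma$'' does not define a measure-preserving, or even well-defined, free action on $X$. The standard repair (and the one in \cite{GL09}) is to do the extension \emph{before} invoking Hjorth: enlarge the ergodic treeable subrelation of cost $>1$ on $A$ to a treeable subrelation on all of $X$ by adjoining a graphing that hooks each point of $X\setminus A$ to $A$ by a single partial isomorphism of the ambient relation; this adds $\mu(X\setminus A)$ to the cost, preserves treeability and ergodicity, keeps the (normalized) cost above $1$, and then Hjorth's theorem produces the free $F_2$-action on the whole of $[0,1]^G$ directly.
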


Using the latter theorem, Monod and Ozawa proved that a large class of groups are non-unitarizable:

\begin{theor}[Monod-Ozawa, \cite{MO09}]
If $G$ is a countable non-amenable group and $A$ is any countable group containing an infinite abelian subgroup, then the restricted wreath product $A \wr G$ is non-unitarizable.
\end{theor}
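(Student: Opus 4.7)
The strategy is to promote the non-unitarizability of $F_2$ (Theorem 1.1) to $A \wr G$ using the Gaboriau--Lyons orbit inclusion (Theorem 1.2). Since $A$ contains an infinite abelian subgroup $A_0$, the natural embedding $A_0 \wr G \hookrightarrow A \wr G$ together with Proposition \ref{prop: subgoups of unitarizable} lets me assume $A$ itself is an infinite abelian group; passing to a further subgroup, I may take $A$ to be $\Z$ or $\Z/p\Z$.

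I would fix a uniformly bounded, non-unitarizable representation $\pi : F_2 \to B(\Hi)$. Let $X = \hat{A}^G$ carry the product Haar measure, so that the shift $G \curvearrowright (X,\mu)$ is a Bernoulli action. By Gaboriau--Lyons there is a free measurable $F_2$-action on $(X,\mu)$ whose orbits are contained in the $G$-orbits. My plan is then to assemble a uniformly bounded representation $\tilde\pi$ of $A \wr G = A^{(G)} \rtimes G$ on $L^2(X, \Hi)$, in which $A^{(G)}$ acts by pointwise multiplication (using the Pontryagin duality identification $X = \hat{A}^G$), while $G$ acts by the Koopman shift twisted by a measurable cocycle $c : G \times X \to B(\Hi)$ factoring through $\pi$. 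Because the $F_2$-orbits sit inside, but do not equal, the $G$-orbits, this cocycle cannot be pulled back directly from $\pi$; I would build $c$ using a measurable selector that on each $G$-orbit expresses the $G$-move as an $F_2$-word followed by a leftover transformation on which $\pi$ is replaced by the identity, and then check that the resulting $\tilde\pi$ is a genuine group homomorphism satisfying the wreath-product relations with $\sup_{g\in G}\lVert\tilde\pi(g)\rVert < \infty$.

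To conclude non-unitarizability I would argue by contradiction: assuming a bounded positive operator $B$ on $L^2(X,\Hi)$ with $\tilde\pi(\gamma)^* B \tilde\pi(\gamma) = B$ for every $\gamma \in A \wr G$, commutation with the multiplication operators from $A^{(G)}$ forces a direct-integral decomposition $B = \int_X^\oplus B_x\, d\mu(x)$ into bounded positive operators on $\Hi$. Invariance under $G$ then yields a cocycle identity relating the $B_x$ along $F_2$-orbits, from which an $F_2$-invariant averaging extracts a single bounded positive operator witnessing the unitarizability of $\pi$, contradicting the choice of $\pi$. The main technical hurdle I expect is the construction of the cocycle $c$ and the verification of its relations; abelianness of $A$ is decisive here because Pontryagin duality converts $A^{(G)}$ into commuting multiplication operators, securing the direct-integral decomposition on which the reduction to $\pi$ rests.
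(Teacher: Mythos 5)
Your high-level skeleton does match the strategy of \cite{MO09} (and of Theorem~A of this paper, which generalizes the statement): realize $\bigoplus_G A$ as multiplication operators on $L^2(\hat A^G)$ via Pontryagin duality, use the Gaboriau--Lyons orbit inclusion of $F_2$ into the Bernoulli $G$-action, and decompose a would-be unitarizer as a direct integral by its commutation with $\bigoplus_G A$. But the central construction is missing, and as you describe it it cannot work. A cocycle $c:G\times X\to\Aut(\Hi)$ on the \emph{same} fiber $\Hi$ that ``applies $\pi$ along the $F_2$-word and the identity on the leftover'' violates the cocycle identity: take $x\not\sim_{F_2}g_2x$ but $g_2x\sim_{F_2}g_1g_2x$ via $w\neq e$; then $c(g_1,g_2x)c(g_2,x)=\pi(w)$ while $c(g_1g_2,x)=\Id$. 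If instead you repair it with a measurable transversal by setting $c(g,x)=\pi(\theta(gx))\pi(\theta(x))^{-1}$, you do get a uniformly bounded cocycle, but it is a coboundary, so the resulting representation of $A\wr G$ is similar to a unitary one and witnesses nothing. The missing idea is the \emph{induced representation} from the sub-equivalence relation $E_{F_2}$ to the $G$-orbit relation $E_G$ (Section~4 of the paper; in \cite{MO09} the analogous step is phrased via derivations): the fiber must be enlarged from $\Hi$ to $\Hi^{I}$, with $I$ indexing the $F_2$-classes inside a single $G$-class, and $c(g,x)$ must permute these coordinates while applying $\pi$ along the $F_2$-moves between chosen class representatives. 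Only then does the $0$-th coordinate carry the tensor-product $F_2$-representation that contains $\pi$ as a subrepresentation, which is what makes the construction non-degenerate.

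Two further points. Your closing step invokes ``an $F_2$-invariant averaging''; $F_2$ is non-amenable, so no such averaging exists, and none is needed: once $B=\int^{\oplus}B_x\,d\mu(x)$, one checks directly that $(B_x)$ is a unitarizing field for the restriction to $E_{F_2}$, then compresses to the invariant $0$-th coordinate and finally to the invariant subspace $1\otimes\Hi$, on which the representation is exactly $\pi$; the compressed quadratic form is still bounded below, contradicting the choice of $\pi$. Finally, the reduction ``I may take $A$ to be $\Z$ or $\Z/p\Z$'' is wrong on both counts: $\Z/p\Z$ is finite, with atomic dual, so the original Gaboriau--Lyons theorem (for $[0,1]^G$, i.e.\ a diffuse base) does not apply to $\hat A^G$ --- handling finite $A$ is precisely the content of Bowen's refinement and of Theorem~A of this paper; and an infinite abelian group need not contain $\Z$ (e.g.\ $\bigoplus_{\N}\Z/2\Z$). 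You should stop at a countably infinite abelian subgroup, whose Pontryagin dual is an infinite compact metrizable group with non-atomic Haar measure, making $\hat A^G$ a diffuse Bernoulli base to which Gaboriau--Lyons applies.
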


We remind that the wreath product is defined as follows:

\[
A \wr G = \bigoplus_G A \rtimes G,
\]
where $\bigoplus_G A$ is the direct sum of copies of $A$ indexed by $G$ upon which $G$ acts by shifts.

Lewis Bowen provided the following refinement of the Gaboriau-Lyons theorem:

\begin{theor}[L. Bowen, \cite{B19}]\label{thm: Bernoulli has Day property}
In the setting of the Gaboriau-Lyons theorem, the action of $G$ could be taken to be any non-trivial Bernoulli action $G \curvearrowright (K, \kappa)^G$. 
\end{theor}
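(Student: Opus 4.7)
The plan is to adapt the percolation-based Gaboriau--Lyons construction so that it runs from iid samples of the base $(K,\kappa)$, and then to lift the resulting $F_{2}$-orbit structure along a $G$-equivariant factor map. Recall that Gaboriau--Lyons locate, inside the $G$-orbit equivalence relation of $[0,1]^{G}$, an ergodic treeable subequivalence relation of cost strictly greater than $1$, which by Hjorth's theorem orbitally contains $F_{2}$; this subequivalence relation is read off from the connected components of Bernoulli bond percolation on the Cayley graph of $G$ at a parameter $p^{\ast}$ lying in the non-uniqueness phase $(p_{c},p_{u})$, which is non-empty because $G$ is non-amenable. The whole construction only uses Bernoulli$(p^{\ast})$-randomness, so its conclusion is already available when the $G$-action is the Bernoulli shift on $(\{0,1\},\mathrm{Bernoulli}(p^{\ast}))^{G}$ rather than $[0,1]^{G}$.

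Step one is to produce a $G$-equivariant measurable factor $\Phi \colon (K,\kappa)^{G}\to (\{0,1\},\mathrm{Bernoulli}(p^{\ast}))^{G}$. When $\kappa$ has a measurable set of mass exactly $p^{\ast}$ one can take the single-site map $\Phi(\omega)_{g} = \mathbf{1}_{A}(\omega_{g})$. For general non-trivial $\kappa$ this may fail, and one passes to a multi-site equivariant factor $\Phi(\omega)_{g} = F\bigl((\omega_{gh})_{h\in S}\bigr)$, where $S\subset G$ is a finite window and $F\colon K^{S}\to\{0,1\}$ is measurable; for $|S|$ large the finite product $\kappa^{S}$ is rich enough to realise any desired marginal $p^{\ast}\in(0,1)$. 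The resulting process is $G$-equivariant and Bernoulli$(p^{\ast})$ on each site, but only finitely dependent rather than iid. A stochastic-domination argument (Liggett--Schonmann--Stacey), together with the openness of the non-uniqueness phase, should show that for $p^{\ast}$ chosen safely inside $(p_{c},p_{u})$ the cluster equivalence relation of this finitely dependent percolation is still ergodic, treeable, and of cost $>1$.

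Step two is to lift the $F_{2}$-action along $\Phi$. The Gaboriau--Lyons output amounts to a measurable cocycle $\psi\colon F_{2}\times\{0,1\}^{G}\to G$ satisfying $\psi(f,y)\cdot y = f\cdot y$ (the left-hand side using the $G$-action and the right-hand side the $F_{2}$-action) and $\psi(f,y)\neq e$ almost surely for $f\neq e$. The pullback $\tilde{\psi}(f,\omega) := \psi(f,\Phi(\omega))$ is again a cocycle and defines a measure-preserving $F_{2}$-action on $(K,\kappa)^{G}$ with orbits inside $G$-orbits. Essential freeness comes for free: since $G$ is infinite and $\kappa$ is non-trivial the $G$-action on $(K,\kappa)^{G}$ is essentially free, so $\tilde{\psi}(f,\omega)\cdot\omega=\omega$ forces $\tilde{\psi}(f,\omega)=e$, which for $f\neq e$ is a null event by essential freeness of the Gaboriau--Lyons action downstairs.

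The main obstacle is Step one: transferring the non-uniqueness phase and the cost-greater-than-one estimate from Bernoulli percolation to the finitely dependent $\Phi$-image. Pushing the stochastic-domination route through requires choosing $p^{\ast}$ close enough to $1$ so that Liggett--Schonmann--Stacey applies, and then exploiting openness of $(p_{c},p_{u})$. Alternatively, and arguably more cleanly, one can follow Bowen and replace Bernoulli percolation altogether by finitary random interlacements on the Cayley graph: a percolation model that is natively defined from iid input of any non-trivial base, and for which the analogous non-uniqueness and cost-$>1$ estimates can be established directly on any non-amenable Cayley graph.
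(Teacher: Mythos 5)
This theorem is quoted from Bowen's paper \cite{B19} and is not proved in the present paper, so the relevant comparison is with Bowen's own argument rather than anything internal to this text. Your Step two is fine: pulling back the Gaboriau--Lyons cocycle $\psi$ along a measure-preserving $G$-equivariant factor map $\Phi$ does yield a cocycle (using $\Phi(\tilde\psi(f,\omega)\cdot\omega)=\psi(f,\Phi(\omega))\cdot\Phi(\omega)=f\cdot\Phi(\omega)$), and your essential-freeness argument is correct.

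The genuine gap is exactly where you place it, in Step one, and the Liggett--Schonmann--Stacey route cannot close it. LSS gives stochastic domination of an iid field only when the marginal of the finitely dependent field is close to $1$, whereas the Gaboriau--Lyons construction needs the percolation parameter inside the non-uniqueness interval $(p_c,p_u)$, hence bounded away from $1$; and ``infinitely many infinite clusters, each with infinitely many ends'' is not a monotone event, so domination in either direction transfers no structural information. Even setting monotonicity aside, the inputs to Gaboriau--Lyons --- the Lyons--Schramm indistinguishability theorem (which requires insertion tolerance), the ends and cost computations for clusters, and the very definition of $p_u$ --- are statements about the one-parameter Bernoulli family and do not automatically hold for a block-factor, finitely dependent field that merely has the right one-edge marginal. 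This transfer problem is precisely why the statement was open for low-entropy bases and why Bowen introduced a different percolation model, finitary random interlacements, whose non-uniqueness, treeability and cost estimates are established natively for a process realizable as a factor of an arbitrary non-trivial iid base. Your final sentence correctly identifies that route, but only as a pointer; as written, the proposal's primary argument fails at its crucial step and its fallback is not carried out, so it does not constitute a proof.
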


In this paper we use Bowen's result to improve upon the Monod-Ozawa theorem by dropping the infinite abelian subgroup requirement:

\begin{mytheor}\label{th: main}
If $G$ is a countable non-amenable group and $A$ is any non-trivial countable group, then the restricted wreath product $A \wr G$ is non-unitarizable. In particular, the standard lamplighter group $(\Z/2\Z) \wr G$ over any non-amenable group is non-unitarizable.
\end{mytheor}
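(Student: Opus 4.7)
The strategy is by contradiction. Assuming $A\wr G$ is unitarizable, I will derive that the free group $F_2$ is unitarizable, contradicting Theorem~1.2. The bridge between $A\wr G$ and $F_2$ is Bowen's refinement of Gaboriau--Lyons, which---unlike the original theorem---permits the Bernoulli base to be chosen freely. This flexibility is precisely what allows us to drop the infinite-abelian-subgroup hypothesis of Monod--Ozawa: rather than extracting an infinite abelian subgroup from $A$, we will use the $A$-structure of the Bernoulli base itself.

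The setup: since $A$ is non-trivial, fix a fully supported non-trivial probability measure $\kappa$ on $A$ and consider the Bernoulli action $G\curvearrowright (X,\mu):=(A,\kappa)^G$. Bowen's theorem supplies an essentially free measure-preserving action of $F_2$ on $X$ whose orbits are contained in those of $G$, together with an orbit cocycle $c:F_2\times X\to G$ satisfying $\gamma\cdot x=c(\gamma,x)\cdot x$ a.e. Next, upgrade $X$ to a space on which all of $A\wr G$ acts: the factor $G$ shifts the coordinates, while a finitely supported function $f:G\to A$ acts on $x\in A^G$ coordinatewise by left translation, $(f\cdot x)(h)=f(h)x(h)$. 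Because $\kappa$ has full support, this action is quasi-invariant, so twisting by the square root of the Radon--Nikodym derivative yields a unitary representation $U$ of $A\wr G$ on $L^2(X,\mu)$. Given a uniformly bounded non-unitarizable representation $\pi:F_2\to B(\Hi)$ furnished by Theorem~1.2, I then manufacture a uniformly bounded representation $\rho$ of $A\wr G$ on $L^2(X,\mu;\Hi)\cong L^2(X,\mu)\otimes\Hi$ by combining $U$ with a fibrewise twist by $\pi\circ c$; concretely, an element of $A\wr G$ transports a section $\xi:X\to\Hi$ as under $U$ and then applies, pointwise in $x$, the operator $\pi(\gamma(x))$, where $\gamma(x)\in F_2$ is determined by the orbit cocycle applied to the $G$-projection of the group element.

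The main obstacle splits in two. First, one must verify that $\rho$ is actually a group homomorphism, uniformly bounded: this amounts to checking that the cocycle identity for $c$ interacts correctly with the wreath-product multiplication and the Radon--Nikodym twist, and that the uniform norm bound on $\pi$ carries over. Second, and more delicate, one must show that a unitarization of $\rho$ descends to a unitarization of $\pi$. The natural route is to disintegrate a hypothetical unitarizing operator as a direct integral $B=\int_X^{\oplus} B_x\,d\mu(x)$ with $B_x\in B(\Hi)$, and use the orbital-inclusion structure to show that the $\rho$-invariance of $B$ forces the field $x\mapsto B_x$ to be invariant along $F_2$-orbits; combined with ergodicity of the Bernoulli action, this should make $B_x$ essentially constant, producing a bounded positive $B_0\in B(\Hi)$ which unitarizes $\pi$. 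The extraction of this constant fiber, and the careful analysis of how the twist by $\pi\circ c$ encodes $\pi$-invariance into $\rho$-invariance, is where I expect the bulk of the technical work to lie.
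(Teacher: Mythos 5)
Your overall strategy --- contradiction via Bowen's theorem, transporting a non-unitarizable $F_2$-representation through the Bernoulli space into $A\wr G$ --- is the same as the paper's, but the two steps you defer as ``where the bulk of the technical work lies'' are not technical verifications: as described they fail, and the missing ideas are precisely what the paper's proof is built around. The first gap is that your representation $\rho$ is not well-defined. The orbit cocycle furnished by Gaboriau--Lyons/Bowen goes from $F_2\times X$ to $G$, because the $F_2$-orbits sit \emph{inside} the $G$-orbits; there is no cocycle $G\times X\to F_2$, since for generic $g\in G$ and $x\in X$ the point $gx$ does not lie in the $F_2$-orbit of $x$. Hence ``$\pi(\gamma(x))$ where $\gamma(x)$ is determined by the orbit cocycle applied to the $G$-projection'' has no meaning. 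To push $\pi$ from $F_2$ up to $G$ one must use the induced-representation construction for the inclusion of equivalence relations $E_{F_2}\subset E_G$: the fiber becomes $\Hi^{[E_G:E_{F_2}]}$ (typically an infinite multiple of $\Hi$), with a measurable choice of representatives of the $F_2$-classes inside each $G$-class governing a permutation-plus-twist action. The paper devotes Section 4 to this; your one-step formula skips it and cannot be repaired without it.

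The second gap is the descent of the unitarizing operator. A bounded operator $B$ on $L^2(X,\mu;\Hi)$ admits a disintegration $\int_X^{\oplus}B_x\,d\mu(x)$ only if it is decomposable, i.e.\ commutes with $L_\infty(X,\mu)$, and you give no reason why a unitarizing operator for $\rho$ should be. This is exactly where the structure of $A$ enters the paper's argument: one first reduces to $A$ abelian (unitarizability passes to subgroups, so a cyclic subgroup of $A$ suffices), takes the Bernoulli base to be the Pontryagin dual $\hat A$ with Haar measure rather than $(A,\kappa)$, and lets $N=\bigoplus_G A$ act by \emph{multiplication by characters}. These characters span a weakly dense $*$-subalgebra of $L_\infty(X,\mu)$, so any operator commuting with $\rho(N)$ --- in particular a unitarizing operator, since $\rho|_N$ is already unitary --- is automatically totally fibered. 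In your setup $N$ acts by Radon--Nikodym-twisted translation operators on $(A,\kappa)^G$, and commuting with those does not force decomposability, so the disintegration of $B$ is unjustified. Relatedly, ``ergodicity makes $B_x$ essentially constant'' is not the right endgame even after decomposing: the invariance of the field along orbits is twisted, $\eta(x,y)^*B_y\,\eta(x,y)=B_x$, so one does not get a constant operator; the paper instead uses the twisted field itself as a unitarizing field for the equivalence-relation representation and only then restricts to the invariant copy of $\Hi$ carrying $\pi$.
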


In light of Proposition \ref{prop: subgoups of unitarizable}, it is enough to prove the theorem assuming $A$ to be a cyclic group (note that $A' \wr G \leqslant  A'' \wr G$ if $A' \leqslant  A''$). In fact, since unitarizability obviously passes to factor-groups, we may assume that $A$ is a finite cyclic group.

On the high level, the proof resembles that of Proposition \ref{prop: subgoups of unitarizable}. Intuitively, instead of constructing induced group representation, we construct an induced representations of measurable equivalence relations. This construction is the same as one used in the paper by Ozawa and Monod. A technical difference is that they used reformulation of unitarizability in terms of derivations, whereas the proof presented here relies on reformulation of unitarizability in terms of existence of a unitarizing operator.

We next tackle the question of strong Ulam stability, originally posed by Ulam in \cite{U60}. A map $\rho: G \to \Uni(\Hi)$ is called an $\varepsilon$-representation if $\rho(\Id_G) = \Id_{\Hi}$ and $\lVert \rho(g)\rho(h) - \rho(gh)\rVert < \varepsilon$ for all $g,h \in G$. On the space of all maps $G \to \Uni(\Hi)$ we may define the uniform metric: $\lVert \rho_1 - \rho_2 \rVert = \sup_{g \in G} \lVert \rho_1(g) = \rho_2(g)\rVert$. We would say that a group $G$ is strongly Ulam stable if for every $\varepsilon > 0$ there is a $\delta > 0$ such that for any $\delta$-representation $\rho$ there an (actual) represnetation $\rho'$ in the distance less than $\varepsilon$. It was proved by Kazhdan \cite{K82} that amenable groups are strongly Ulam stable. In the same paper he  showed that surface groups are not strongly Ulam stable. P Rolli in \cite{R09} gave a surprisingly simple example showing that $F_2$ is not strongly Ulam stable. Leveraging the latter,  Burger, Ozawa and Thom proved in \cite{BOT13} that groups containing non-abelian free subgroups are not strongly Ulam stable. 
We further this line of inquiry by showing that any wreath product of an abelian group with a non-amenable group is not strongly Ulam stable. 


\begin{mytheor}\label{thm: ulam non-stability}
A wreath product $A \wr G$ of a non-trivial abelian group $A$ and a non-amenable group $G$ is not strongly Ulam stable.
\end{mytheor}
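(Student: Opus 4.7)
The plan is to combine Rolli's non-stability example for $F_2$ with the measured induction of representations used in the proof of Theorem \ref{th: main}. As there, by a wreath-analogue of Proposition \ref{prop: subgoups of unitarizable} and passage to quotients, we may reduce to the case where $A$ is a finite cyclic group. Rolli's construction provides, for every $\delta > 0$, a $\delta$-representation $\rho_\delta : F_2 \to \Uni(\Hi_0)$ whose uniform distance to every genuine unitary representation of $F_2$ is bounded below by a universal constant $\eta_0 > 0$; this is the object we will transplant to $A \wr G$.

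Apply Theorem \ref{thm: Bernoulli has Day property} with Bernoulli base $(A, \text{uniform})$: this yields a free p.m.p. action of $F_2$ on $X := A^G$ whose orbits are contained in those of the Bernoulli shift action $G \curvearrowright X$. Feeding $\rho_\delta$ into the induced-representation construction from the proof of Theorem \ref{th: main} should produce a map $\widetilde{\rho_\delta} : A \wr G \to \Uni(\widetilde{\Hi})$ on $\widetilde{\Hi} \simeq L^2(X; \Hi_0)$: the copies of $A$ in $\bigoplus_G A$ act by pointwise multiplication through a faithful character of $A$ evaluated at the corresponding coordinate of $X$, and the $G$-factor acts by the Bernoulli shift twisted by the $F_2$-valued orbit cocycle post-composed with $\rho_\delta$. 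A defect calculation parallel to the one behind Theorem \ref{th: main} should show that $\widetilde{\rho_\delta}$ is a $\delta'(\delta)$-representation with $\delta'(\delta) \to 0$ as $\delta \to 0$.

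Assume for contradiction that $A \wr G$ is strongly Ulam stable. Then for $\delta$ small enough there is a genuine unitary representation $\pi$ of $A \wr G$ on $\widetilde{\Hi}$ at uniform distance at most some preassigned $\eta' \ll \eta_0$ from $\widetilde{\rho_\delta}$. The restriction $\pi\big|_{\bigoplus_G A}$ furnishes a joint spectral decomposition of $\widetilde{\Hi}$ that must be uniformly close to the canonical pointwise-multiplication decomposition $L^2(X; \Hi_0)$ coming from $\widetilde{\rho_\delta}$; after intertwining the two decompositions and restricting the corrected $G$-part of $\pi$ along (almost) a single $F_2$-orbit in $X$, one extracts a genuine unitary representation of $F_2$ on $\Hi_0$ at uniform distance less than $\eta_0/2$ from $\rho_\delta$, contradicting Rolli's bound.

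The main obstacle is this final descent step: quantitatively inverting the induction so that uniform closeness of $\pi$ to $\widetilde{\rho_\delta}$ translates, without excessive loss, into uniform closeness of an actual $F_2$-representation to $\rho_\delta$. Unlike the unitarizability argument behind Theorem \ref{th: main}, where one only manipulates an invariant positive operator, here every step — matching the spectral decomposition under $\bigoplus_G A$, restricting to an $F_2$-orbit, and correcting for the Bernoulli cocycle — must be controlled in operator norm uniformly in $\delta$, so that the fixed Rolli gap $\eta_0$ downstairs really does obstruct closeness upstairs. Properly packaging this descent is where one expects the bulk of the technical work to sit.
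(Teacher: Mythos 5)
Your overall architecture is the same as the paper's: transplant Rolli's $F_2$-counterexample into $A\wr G$ via Bowen's orbit inclusion of $F_2$ into the Bernoulli shift $G\curvearrowright \hat A^G$ and the induction of (quasi)representations along the inclusion of orbit equivalence relations, then argue that a genuine representation of $A\wr G$ close to the induced quasirepresentation would descend to contradict Rolli's gap. The upward direction (defect control under tensoring, induction, and the extension from $G$ to $A\wr G$) is fine. But the descent, which you correctly flag as the crux, is left as a gap, and the two ideas needed to close it are precisely what you do not supply. First, your step ``the joint spectral decomposition of $\pi|_{\bigoplus_G A}$ must be uniformly close to the canonical one, so after intertwining\ldots'' is not a soft statement: what is actually needed is a unitary $W$ with $\lVert W-\Id\rVert$ small conjugating $\pi|_{N}$ ($N=\bigoplus_G A$) onto the canonical multiplication embedding $\iota$. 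The paper obtains $W$ from \emph{deformation rigidity of amenable (more generally, unitarizable) groups} (Theorem 4.3 of \cite{BOT13}), applied to the abelian group $N$; only after conjugating by $W$ does the $G$-part become fibered over the Bernoulli action (Proposition \ref{prop: wreath to fibered}) and hence descend to a representation of the orbit equivalence relation. Without this input, uniform closeness of two representations of $N$ does not by itself produce an intertwiner close to the identity.

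Second, your claim that restricting along an $F_2$-orbit ``extracts a genuine unitary representation of $F_2$ on $\Hi_0$'' is not accurate as stated: after inducing from $E_{F_2}$ up to $E$ and coming back down, the resulting representation of $F_2$ lives on a strictly larger space ($\Hi_0^{[E:E_{F_2}]}$, further tensored with $L_2(X,\mu)$), containing $\Hi_0$ only as an invariant subspace of the quasirepresentation. Rolli's lower bound, which concerns distance to representations \emph{on $\Hi_0$}, does not directly apply. The paper resolves this by working with the notion of \emph{extensional} strong Ulam stability and by invoking Lemma 2.6 of \cite{BOT13} (a finite-dimensional almost-invariant subspace admits a nearby genuinely invariant subspace), which together with the finite-dimensionality of Rolli's example yields that $F_2$ is not extensionally strongly Ulam stable (Proposition \ref{prop: free is not stable}); this is the form of the obstruction that survives the induction/restriction. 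Finally, your opening reduction to $A$ finite cyclic via a ``wreath-analogue of Proposition \ref{prop: subgoups of unitarizable}'' is both unjustified (strong Ulam stability is not known to pass to subgroups --- that is exactly the subtlety the extensional notion is designed to handle) and unnecessary, since the Pontryagin-dual Bernoulli construction works verbatim for any non-trivial abelian $A$.
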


It is an open problem whether all non-amenable groups are not strongly Ulam stable.

A unitary representation $\rho: G \to \Uni(G)$ is called stronly rigid if for every $\varepsilon>0$ there is $\delta>0$ such that any unitary representatio:n $\rho'$ with distance at most $\delta$ from $\rho$ is conjugated by a unitary operator $U$ such that $\lVert U - \Id_{\Hi}\rVert < \varepsilon$. 
In \cite{BOT13} it was shown that groups containing a non-abelian free subgoup have non strongly rigid representations. 
We again show that this result could be extended to wreath prodcuts of non-trivial and non-amenable groups. 

\begin{mytheor}\label{thm: not strong rigidity}
A countable group containing as a subgroup a wreath product of a non-trivial and a non-amenable group has a not strongly rigid representation
\end{mytheor}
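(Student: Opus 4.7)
My plan is to derive non-strong-rigidity from the non-unitarizability of $A \wr G$ established in Theorem A, following the template used by Burger, Ozawa, and Thom \cite{BOT13} in the free-group case.

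First, I would argue that possession of a not-strongly-rigid unitary representation is inherited by overgroups via the induction construction. If $H \leq \Gamma$ and $\pi: H \to \Uni(\Hi)$ is not strongly rigid, then the induced representation $\mathrm{Ind}_H^\Gamma \pi$ of $\Gamma$ is also not strongly rigid: induction is continuous in the sup-norm on representations, so small unitary deformations of $\pi$ yield small deformations of $\mathrm{Ind}_H^\Gamma \pi$; moreover, $(\mathrm{Ind}_H^\Gamma \pi)|_H$ contains $\pi$ as a direct summand, so any small unitary intertwiner between the induced representation and its deformation restricts and projects to a small unitary intertwiner for $\pi$, contradicting non-rigidity. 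This reduces Theorem C to producing a non-strongly-rigid representation of $A \wr G$ itself.

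By Theorem A, there is a uniformly bounded non-unitarizable representation $\pi : A \wr G \to GL(\Hi)$. From $\pi$, I would extract cocycle data: after a suitable renormalization, $\pi$ gives rise to a genuine unitary representation $\sigma$ of $A \wr G$ together with a bounded $1$-cocycle $c$, valued in an appropriate space of operators carrying a $\sigma$-conjugation action, whose class in bounded cohomology is non-trivial; this non-triviality is precisely the obstruction to unitarizing $\pi$. Using this cocycle data, I would then construct a continuous family of unitary representations $\rho_t: A \wr G \to \Uni(\Hi \oplus \Hi)$ with $\rho_0 = \sigma \oplus \sigma$ and $\rho_t$ a unitary twist of $\rho_0$ determined by $t c$, arranged so that any sufficiently small unitary intertwiner between $\rho_0$ and $\rho_t$, for arbitrarily small positive $t$, would force $c$ to be a bounded coboundary --- contradicting the non-triviality of its class and hence showing that $\rho_0$ is not strongly rigid.

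The main obstacle will be making this cohomological extraction and twisting procedure go through in the present generality, since in the free-group case of \cite{BOT13} the required cocycles come from explicit boundary representations, whereas here they must be manufactured from the non-unitarizable representation provided by Theorem A. Verifying that the resulting deformation $\rho_t$ is genuinely unitary and continuous in $t$, and, crucially, that any hypothetical small intertwiner of $\rho_0$ and $\rho_t$ trivializes $c$ at the level of bounded coboundaries, will require carefully matching the infinitesimal structure of the intertwiners with the cocycle data. Once these points are settled, the non-strong-rigidity of $\rho_0$, and therefore of $\mathrm{Ind}_{A \wr G}^\Gamma \rho_0$, is immediate.
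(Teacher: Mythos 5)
Your opening reduction---passing non-strong-rigidity from $A\wr G$ to any overgroup via induction, restricting a small intertwiner to the invariant copy of $\Hi$ and taking the unitary part of its polar decomposition---is correct and is exactly how the paper begins (it quotes Proposition 4.4.2 and Lemma 4.5 of \cite{BOT13} for this). The problem is the core step. There is no known implication from ``$\Gamma$ is non-unitarizable'' to ``$\Gamma$ has a non-strongly-rigid unitary representation,'' and your sketch does not supply one. The cocycle data you extract from a uniformly bounded non-unitarizable representation is a bounded, non-inner derivation $c:\Gamma\to\bop(\Hi)$ relative to a unitary representation $\sigma$; but the ``unitary twist'' $\rho_t$ determined by $tc$ is not a representation: multiplicativity of $\exp(tc(\cdot))\sigma(\cdot)$ fails at order $t^2$, so what you actually produce is a family of $\varepsilon$-representations. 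That is precisely the input for the Ulam (quasi-representation) stability question of Theorem B, not for deformation rigidity, where one needs \emph{genuine} unitary representations that are uniformly close yet not conjugate by a small unitary. Note also that the implication proved in \cite{BOT13} goes the other way (unitarizable $\Rightarrow$ deformation rigid); its converse is not available, and if it were, Theorem C would follow from Theorem A with no further work, which is not how the paper argues.

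What the paper does instead is transport an \emph{explicit} non-strongly-rigid unitary representation of $F_2$ (Corollary 4.7 of \cite{BOT13}) rather than manufacture one cohomologically. Concretely: by Bowen's refinement of Gaboriau--Lyons, the Bernoulli action $G\curvearrowright (X,\mu)=\hat{A}^G$ orbitally contains $F_2$; tensoring the $F_2$-representation with the Koopman representation gives a representation of the orbit equivalence relation of the $F_2$-action, which is still not strongly rigid; inducing along the inclusion of equivalence relations yields a non-strongly-rigid representation $\eta$ of the orbit equivalence relation $E$ of the Bernoulli $G$-action; finally $\eta$ gives a fibered representation of $G$ on $L_2(X,\mu,\Hi)$ which extends to $A\wr G$ by letting $\bigoplus_G A$ act through $\iota:L_\infty(X,\mu)\to\bop(L_2(X,\mu,\Hi))$ via Pontryagin duality. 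The punchline is that any small unitary conjugating two such extensions must commute with $\iota(\bigoplus_G A)$, hence with $L_\infty(X,\mu)$, hence is totally fibered, and its explicit fibered form $(U_x)$ is a small conjugator for $\eta$ and its deformation---contradicting the non-rigidity of $\eta$. If you want to salvage your approach you would need to replace the cohomological twisting by this transport argument (or otherwise produce genuine close unitary representations of $A\wr G$), and you would also need the paper's preliminary reduction from arbitrary non-trivial $A$ to finite cyclic $A$.
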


A countable Borel measure preserving equivalence relation $E$ on a standard probability space $(X,\mu)$ is an equivalence relation which is a Borel subset of $X \times X$, such that every equivalence class is at most countable and such that any partial Borel injection $\psi$, whose graph is a subset of $E$, is measure preserving. All equivalence relations on standard probability spaces in the sequel would be such.

The {\em full group} $\llbracket E\rrbracket$ of an equivalence relation $E$ is the group of all measure preserving automorphisms whose graphs are subsets of $E$. The {\em uniform distance} between two elements $T_1, T_2$ of $\llbracket E\rrbracket$ is defined as the measure of the set where $T_1$ and $T_2$ differ; the topology defined by this metric is called the {\em uniform topology}.

We would say that a countable Borel measure preserving equivalence relation $E$ on a standard probability space $(X,\mu)$ orbitally contains a group $N$ if there is an essentially free action of $N$ on $(X,\mu)$ such that the $N$-orbit of almost every point is contained in the $E$-equivalence class of that point. 

\begin{mytheor}\label{thm: dense subgroup}
	Assume that $E$ is a countable Borel measure preserving equivalence relation that orbitally contains a countable non-unitarizable group. Let $G$ be a countable dense subgroup of the full group $\llbracket E \rrbracket$ of $E$. We have the following:
	\begin{itemize}
		\item if $E$ orbitally contains a non-unitarizable group, then $G$ is non-unitarizable;
		\item if $E$ orbitally contains a group that has a not strongly rigid representation, then $G$ has a not strongly rigid representation. 
	\end{itemize}
\end{mytheor}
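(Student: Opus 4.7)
Both items follow a single induced-representation scheme for measurable equivalence relations, along the lines mentioned after Theorem~\ref{th: main}. Fix an essentially free action $N \curvearrowright (X,\mu)$ witnessing the orbit inclusion. Given any representation $\pi : N \to B(\Hi)$, form the measurable Hilbert bundle over $X$ whose fiber at $x$ is the space of $\pi$-equivariant functions $\xi : [x]_E \to \Hi$ (so $\xi(n\cdot y) = \pi(n)\xi(y)$) that are square-summable across an $N$-orbit transversal of $[x]_E$; let $\tilde\Hi$ be its $L^2$-section space, and define $\Pi : \llbracket E \rrbracket \to B(\tilde\Hi)$ by $(\Pi(T)\xi)_x := \xi_{T^{-1}x}$, which is well-posed because $T^{-1}x$ lies in $[x]_E$. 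This $\Pi$ is uniformly bounded by $\sup_n \lVert\pi(n)\rVert$ and uniformly continuous from the uniform metric on $\llbracket E \rrbracket$ to the strong operator topology on $B(\tilde\Hi)$: if $T_1, T_2$ agree off a set of measure $\eta$, then $(\Pi(T_1)-\Pi(T_2))\xi$ is supported on that set. The ``constant sections''---the equivariant functions supported on the single $N$-orbit $[x]_N$ and taking value $v$ at $x$---form a canonical copy of $\Hi$ inside $\tilde\Hi$ which is $\Pi|_N$-invariant, and $\Pi|_N$ acts on it as $\pi$.

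\textbf{First item.} Take $\pi$ uniformly bounded and non-unitarizable. If $\Pi|_G$ were unitarizable with unitarizing operator $B$, the identity $\Pi(g)^* B \Pi(g) = B$ on the dense subgroup $G$ would extend by continuity of $\Pi$ to all of $\llbracket E \rrbracket$ and in particular to $N$. Since $B$ is uniformly positive, the restriction of the equivalent inner product $\langle B\cdot,\cdot\rangle$ to the $\Pi|_N$-invariant copy of $\Hi$ unitarizes $\pi$, contradicting the hypothesis.

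\textbf{Second item.} Let $\pi : N \to \Uni(\Hi)$ be unitary and not strongly rigid, with threshold $\varepsilon_0$ witnessing this. The induced $\Pi$ is then unitary. Given $\delta > 0$, pick a unitary $\pi'$ within $\delta$ of $\pi$ that is not $\varepsilon_0$-close-conjugate to $\pi$, induce it to $\Pi'$ on the same $\tilde\Hi$, and note $\lVert \Pi - \Pi' \rVert \le \lVert \pi - \pi' \rVert < \delta$. Suppose $U \in \Uni(\tilde\Hi)$ with $\lVert U - \Id \rVert$ small satisfies $U \Pi|_G U^{-1} = \Pi'|_G$; density together with continuity upgrades this to $U \Pi(T) U^{-1} = \Pi'(T)$ for every $T \in \llbracket E \rrbracket$, and in particular for $T \in N$. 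Since the constant-section subspace $\Hi \subset \tilde\Hi$ is preserved by both $\Pi|_N$ and $\Pi'|_N$, letting $P$ be the orthogonal projection onto it one checks that $V := P U|_\Hi$ intertwines $\pi$ and $\pi'$ and is close to $\Id_\Hi$; the unitary part of its polar decomposition then conjugates $\pi$ to $\pi'$ and is still close to $\Id_\Hi$, contradicting the choice of $\pi'$.

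\textbf{Main obstacle.} The heart of the argument is the quantitative bookkeeping in the second item: one must control the passage from $\lVert U - \Id \rVert$ through $\lVert V - \Id_\Hi \rVert$ to the distance from $V$ to its polar unitary, and then choose the threshold defining non-rigidity of $\Pi|_G$ small enough (depending on $\varepsilon_0$ and the bound on $\pi$) that the final unitary beats $\varepsilon_0$. Equally important is verifying that the construction of $\Pi$ depends continuously enough on $\pi$ that $\lVert \Pi - \Pi' \rVert$ is genuinely controlled by $\lVert \pi - \pi' \rVert$; this uses that the formula for $\Pi(T)$ involves $\pi$ only through the equivariance of fibers, so replacing $\pi$ by $\pi'$ amounts to a small pointwise perturbation on each fiber.
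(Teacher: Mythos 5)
Your proposal is correct and follows essentially the same route as the paper: induce the given representation of the orbitally contained group to a uniformly bounded, uniformly-to-SOT continuous representation of $\llbracket E\rrbracket$, use density of $G$ plus that continuity to extend the unitarizing identity (resp.\ the conjugation) from $G$ to all of $\llbracket E\rrbracket$, and then compress to the canonical invariant copy of $\Hi$ (with a polar decomposition in the rigidity case). Your bundle-of-equivariant-sections description of the induction is just a repackaging of the paper's two-step construction (tensor with the Koopman representation, then induce from $E_N$ to $E$ via a measurable transversal), and the quantitative bookkeeping you flag is exactly what the paper handles with its analogue of Lemma 4.5 of Burger--Ozawa--Thom.
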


There are many examples of equivalence relations satisfying the requirements of the theorem above. First, any orbit equivalnece relation of a Bernoulli action by Theorem \ref{thm: Bernoulli has Day property}. Second, in \cite{B19} it is mentioned that the methods of that paper could be combined with those of \cite{BHI18} to show that any Bernoulli extension of a non-hyperfinite ergodic equivalence relation orbitally contains the free group $F_2$. I refer the reader to the aforementioned papers for the details. 

{\em Acknowledgements.} I thank Lewis Bowen for pointing to his work on orbit inclusion of free groups in non-amenable ones and for his remarks and suggestions. I'm grateful to Danil Akhtiamov for discussions of stability and rigidity properties. The research was started during my postdoc in the Einstein Institute of Mathematics at The Hebrew University supported by the Israel Science Foundation grant 1702/17, and carried on in the Euler Mathematical Institute at St. Petersburg State University, supported by Ministry of Science and Higher Education of the Russian Federation, agreement no. 075--15--2022--287.

\section{Fibered Hilbert spaces and representations}

For a Hilbert space $\Hi$ we denote $\Aut(\Hi)$ the group of all bounded invertible operators on $\Hi$. A representation $\pi$ of a countable group $G$ on a Hilbert space $\Hi$ is a group homomorphism from $G$ to $\Aut(\Hi)$. We say that two representations $\pi$, $\pi'$ of the same group $G$ on spaces $\Hi$ and $\Hi'$ are {\em similar} if there is a bounded invertible operator $Q$ from $\Hi$ to $\Hi'$  such that $Q \pi(g) Q^{-1} = \pi'(g)$, for all $g \in G$. We would say also that this similarity is given by operator $Q$.

Let $(X, \mu)$ be a standard probability space and $\Hi$ be a separable Hilbert space. A {\em fibered Hilbert space} $L_2(X,\mu, \Hi)$ over $(X,\mu)$ with fiber $\Hi$ is the space of all classes of almost everywhere equal measurable functions $f$ from $X$ to $\Hi$ such that $\int_X \lVert f(x) \rVert^2 d\mu(x) < +\infty$. The scalar product is given by $\langle f, h\rangle = \int_X \langle f(x), h(x)\rangle d\mu(x)$. It is not hard to check that this space endowed with the aforementioned scalar product is indeed a Hilbert space. Note that $L_2(X, \mu, \C)$ is the standard $L_2$ over $(X,\mu)$, and that $L_2(X,\mu, \Hi)$ is canonically isomorphic to the Hilbert space tensor product $L_2(X,\mu) \otimes \Hi$. Let $L_2(X,\mu, \Hi)$ be a fibered Hilbert space. For every measurable subset $C$ of $X$ we define a projector operator $P_C$ by 
$(P_C(f))(x) = 0$ if $x \notin C$, and $(P_C(f))(x) = f(x)$ if $x \in C$. Note that $P_{C_1} \circ P_{C_2} = P_{C_2} \circ P_{C_1} = P_{C_1}$ if $C_1 \subset C_2 \subset X$. If $(C_i)$ is a sequence of measurable subsets of $X$ such that $\mu(C_i \Delta D) \to 0$ for some measurable subset $D$ of $X$, then $P_{C_i}$ converge to $P_D$ in the strong operator topology. In fact there is a natural embedding $\iota : L_\infty(X,\mu) \hookrightarrow \bop(L_2(X, \mu, \Hi))$ given by $(\psi f) (x) = \psi(x) f(x)$ for $\psi \in L_\infty(X, \mu)$, $f \in L_2(X,\mu, \Hi)$ and a.e. $x \in X$. We will sometimes identify $L_\infty(X, \mu)$ with its image under the aforementioned embedding. For convenience we 

We will call an operator $B$ on $L_2(X,\mu,\Hi)$ {\em totally fibered} if $B P_C = P_C B$ for all measurable subsets $C$ of $X$. So an operator is totally fibered iff it lies in the commutant of $L_\infty(X,\mu)$.

The following proposition gives an equivalent definition:
\begin{prop}\label{prop: totally fibered operator}
An operator $B$ on $L_2(X,\mu, \Hi)$ is totally fibered iff there is a measurable field of operators $(B_x)_{x\in X}$ on $\Hi$ (i.e. the map $x \mapsto B_x$ is a measurable map from $X$ to $\bop(\Hi)$), such that the norms of $B_x$ are essentially bounded, and that $(B(f))(x) = B_x(f(x))$, for all $f \in L_2(X, \mu, \Hi)$, and a.e. $x \in X$. Moreover, $\lVert B \rVert = \esssup_{x \in X} \lVert B_x\rVert$.
\end{prop}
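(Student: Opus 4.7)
The ``if'' direction is immediate, since an operator given by a measurable field commutes with every multiplication operator (including each $P_C$) just by pointwise inspection of the defining formula. So I focus on the ``only if'' direction, assuming $B P_C = P_C B$ for all measurable $C \subseteq X$. The first step is to upgrade this to commutation with $\iota(\psi)$ for every $\psi \in L_\infty(X,\mu)$: simple functions are norm-dense in $L_\infty$, the map $\psi \mapsto \iota(\psi)$ is an isometric embedding, and operators commuting with $B$ form a SOT-closed subalgebra of $\bop(L_2(X,\mu,\Hi))$. Thus $B$ lies in the commutant of the maximal abelian subalgebra $\iota(L_\infty(X,\mu))$.

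Next I would extract the matrix coefficients of $B$ in a fixed orthonormal basis $(e_i)_{i \in \N}$ of $\Hi$. For each pair $(i,j)$ define an operator $S_{ij}$ on $L_2(X,\mu)$ by the rule that $S_{ij}\phi$ is the $i$-th coordinate (with respect to $(e_k)$) of the vector-valued function $B(\phi \cdot e_j)$. Using the commutation of $B$ with $\iota(L_\infty)$, the operator $S_{ij}$ itself commutes with $L_\infty(X,\mu) \subset \bop(L_2(X,\mu))$, hence by maximal abelianness of the latter, $S_{ij}$ is multiplication by a function $b_{ij} \in L_\infty(X,\mu)$ with $\lVert b_{ij}\rVert_\infty \leq \lVert B\rVert$. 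I now want to assemble the $b_{ij}(x)$ into a bounded operator $B_x$ on $\Hi$ for a.e. $x$.

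The key inequality comes from testing $B$ against vectors of the form $\chi_C \cdot v$ with $v$ in the algebraic $\Q[i]$-linear span $D$ of $(e_i)$: for any $v, w \in D$ and any measurable $C$ with $0 < \mu(C) < \infty$,
\[
\left\lvert \int_C \sum_{i,j} b_{ij}(x) v_j \overline{w_i}\, d\mu(x)\right\rvert = \lvert\langle B(\chi_C v), \chi_C w\rangle\rvert \leq \lVert B\rVert \lVert v\rVert \lVert w\rVert \mu(C).
\]
Since $C$ is arbitrary (and the integrand lies in $L_\infty$), a standard argument --- restricting to real/imaginary parts after multiplication by a countable dense set of unit complex scalars --- shows that for $\mu$-a.e. $x$ one has $\lvert \sum_{i,j} b_{ij}(x) v_j \overline{w_i}\rvert \leq \lVert B\rVert \lVert v\rVert \lVert w\rVert$. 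Taking the union of these null sets over the countable set $D \times D$, outside a single null set the sesquilinear form $(v,w) \mapsto \sum_{i,j} b_{ij}(x) v_j \overline{w_i}$ extends uniquely to a bounded form on $\Hi$ of norm at most $\lVert B\rVert$, which defines $B_x \in \bop(\Hi)$ with $\lVert B_x\rVert \leq \lVert B\rVert$. Measurability of $x \mapsto B_x$ follows from measurability of each $b_{ij}$.

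Finally, the identity $(Bf)(x) = B_x f(x)$ holds a.e. for $f = \chi_C e_j$ directly by construction of $b_{ij}$, extends by linearity to simple functions valued in the finite span of the basis, and then to all $f \in L_2(X,\mu,\Hi)$ by continuity, using boundedness of both $B$ and the operator $f \mapsto (x \mapsto B_x f(x))$ (the latter having norm at most $\esssup_x \lVert B_x\rVert$ from $\lVert Bf\rVert^2 = \int \lVert B_x f(x)\rVert^2 d\mu(x)$). This last formula also yields $\lVert B\rVert \leq \esssup_x \lVert B_x\rVert$, and combined with $\esssup_x \lVert B_x\rVert \leq \lVert B\rVert$ gives the norm equality. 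The main obstacle is the third paragraph: turning the $L_\infty$ matrix-entries $b_{ij}$ into a genuine pointwise bounded operator field requires choosing a countable dense test set before taking null-set unions, and invoking the Lebesgue-type differentiation bound on standard probability spaces.
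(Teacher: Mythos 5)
Your argument is correct and is essentially the paper's own: both produce the fibers $B_x$ by taking Radon--Nikodym-type derivatives of the matrix-coefficient measures $C \mapsto \langle B P_C(1\otimes v), 1\otimes u\rangle$ over a countable $\Q[i]$-linear dense set, bound the resulting sesquilinear forms almost everywhere outside a single null set, and recover the norm equality from the fiberwise bound. Your use of maximal abelianness of $L_\infty(X,\mu)$ to identify the coefficient operators $S_{ij}$ as multiplication operators is just a repackaging of the same Radon--Nikodym step, so the two proofs coincide in substance.
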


We will call the mentioned field of operators $(B_x)$ the {\em explicit fibered form} of the totally fibered operator.

\begin{proof}[Sketch of proof]
It is straightforward that the second definition implies the first one. For the other direction, we may take a countable dense subset $V$ of $\Hi$ that is a vector space over the field of complex-rational numbers $\Q + \Q i$.
Notice that the map $C \mapsto \langle B P_C (1 \otimes v), 1 \otimes u\rangle$, for all Borel subsets $C$ of $X$, is a finite measure on $X$, absolutely continuous with respect to $\mu$, for any $v,u \in \Hi$, and the absolute value of its Radon-Nikodym derrivative is bounded by $\lVert B \rVert \lVert u\rVert \lVert v\rVert$. Using these derrivatives, we refine the quadratic forms corresponding to $B_x$ (it is enough to carry out the procedure for all $v,u \in V$). This way we get the field of operators $(B_x)$, and it is easy to check that their norm should be essentially bounded by $\lVert B\rVert$. 
\end{proof}

It is not hard to see that $\lVert B\rVert = \esssup_{x} \lVert B_x\rVert$. We also note that two totally fibered operators are equal whenever their explicit fibered forms are equal almost surely. We note also that an operator is fibered iff it commutes with a weakly dense subsset of $L_\infty(X, \mu)$.  the following is easy to check.
\begin{lem}\label{lem: fibered unitary}
Let $B$ be a fibered operator and $(C_x)$ is its explicitly fibered form. Operator $B$ is unitary iff $(C_x)$ is unitary for almost every $x$.
\end{lem}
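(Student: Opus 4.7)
The plan is to reduce unitarity of $B$ to pointwise a.e.\ unitarity of its fibers $B_x$ by establishing two operational properties of totally fibered operators. First, the adjoint of a totally fibered operator is totally fibered, with explicit fibered form $(B_x^*)$; second, the composition of two totally fibered operators is totally fibered, with explicit fibered form given by the pointwise product $(B_x B'_x)$. Once these are in hand, $B^*B$ and $BB^*$ admit explicit fibered forms $(B_x^* B_x)$ and $(B_x B_x^*)$. Since $\Id = \iota(1)$ is totally fibered with form $(\Id_{\Hi})$, and since the explicit fibered form of a totally fibered operator is unique up to a.e.\ equality by Proposition \ref{prop: totally fibered operator}, the unitarity conditions $B^*B = BB^* = \Id$ translate into $B_x^* B_x = B_x B_x^* = \Id_{\Hi}$ for a.e.\ $x$, which is precisely a.e.\ unitarity of $B_x$.

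For the adjoint property, commutativity of $B$ with every self-adjoint projection $P_C$ yields $P_C B^* = B^* P_C$ upon taking adjoints, so $B^*$ is totally fibered. To identify its fibers, fix $v,u$ in a countable $(\Q+\Q i)$-dense subspace $V \subset \Hi$ and compute, for any measurable $C \subset X$,
\[
\langle B^*(\mathbf{1}_C \otimes v), \mathbf{1}_C \otimes u\rangle = \overline{\langle B(\mathbf{1}_C \otimes u), \mathbf{1}_C \otimes v\rangle} = \int_C \overline{\langle B_x u, v \rangle}\, d\mu(x) = \int_C \langle B_x^* v, u\rangle\, d\mu(x),
\]
which, via the uniqueness part of Proposition \ref{prop: totally fibered operator} applied on the countable dense set $V$, pins down the explicit fibered form of $B^*$ as $(B_x^*)$ a.e. The composition property follows by an analogous test: $BB'$ commutes with every $P_C$ since each of $B, B'$ does, and testing against $\mathbf{1}_C \otimes v$ identifies the fibered form as $(B_x B'_x)$ a.e.

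The only technical caveat is measurability and essential boundedness of the resulting fields $(B_x^*)$ and $(B_x B'_x)$, but this is immediate: involution and multiplication on norm-bounded subsets of $\bop(\Hi)$ are Borel, and the norm identity in Proposition \ref{prop: totally fibered operator} gives $\esssup_x \lVert B_x^* \rVert = \lVert B\rVert$ together with $\esssup_x \lVert B_x B'_x \rVert \leq \lVert B \rVert \lVert B' \rVert$. There is no genuine obstacle here; the lemma is essentially a bookkeeping consequence of Proposition \ref{prop: totally fibered operator} and the algebraic structure of the commutant of $L_\infty(X,\mu)$ inside $\bop(L_2(X,\mu,\Hi))$.
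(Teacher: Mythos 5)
Your proof is correct: the paper leaves this lemma unproved (``the following is easy to check''), and your argument --- showing that adjoints and compositions of totally fibered operators are totally fibered with fibers $(B_x^*)$ and $(B_xB_x')$, then translating $B^*B=BB^*=\Id$ into $B_x^*B_x=B_xB_x^*=\Id_{\Hi}$ a.e.\ via the a.e.-uniqueness of the explicit fibered form --- is exactly the standard verification the author intends. The only point worth making explicit is that you never need to check measurability of $(B_x^*)$ by hand: since $B^*$ commutes with every $P_C$ it is totally fibered, so Proposition \ref{prop: totally fibered operator} already hands you \emph{some} measurable field for it, which your testing computation then identifies with $(B_x^*)$ a.e.
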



Note that for each measure preserving automorphism $T$ of $(X, \mu)$ there is a corresponding generalized Koopman operator $U_T$:
$(U_T(f))(x) = f(T^{-1}(x))$, for any $f \in L_2(X,\mu, \Hi)$. Note that $U_T^{-1} P_C U_T = P_{TC}$ for any measurable subset $C$ of $X$.
An operator $Q$ on $L_2(X,\mu, \Hi)$ is called a {\em skew-fibered operator} if it is a product of a generalized Koopman operator and a totally fibered operator. Note that this decomposition is unique iff the totally fibered operator is nonzero almost everywhere. In particular, the decomposition is unique for  invertible fiberd operators. It follows from our discussion of totally fibered operators that there is another equivalent definition:

\begin{prop}\label{prop: skew-fibered}
	Let $Q$ be an operator on $L_2(X, \mu, \Hi)$ and $T$ be a  measure preserving transformation of $(X,\mu)$.
	The following are equivalent:
	\begin{enumerate}
		\item $Q$ is skew-fibered over $T$;
		\item $QP_C = P_{T(C)} Q$, for all measurable $C \subset X$;
		\item $Q \psi$ = $U_T(\psi) Q$ for every $\psi \in L_\infty(X, \mu)$ .
		\item $Q \psi$ = $U_T(\psi) Q$ for a weakly spanning set of $\psi \in L_\infty(X, \mu)$ .
	\end{enumerate}
	Here $U_T$ is the Koopman operator associated with $T$. 
\end{prop}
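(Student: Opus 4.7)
The plan is to route the four implications through $(1) \Leftrightarrow (2)$ together with $(1) \Rightarrow (3) \Rightarrow (4) \Rightarrow (1)$, with only the last step carrying any nontrivial content beyond direct intertwining computations. I would begin by recording two bookkeeping identities that follow immediately from the relation $U_T^{-1} P_C U_T = P_{TC}$ stated just before the proposition:
\[
U_T P_C \;=\; P_{T(C)} U_T, \qquad U_T \iota(\psi) \;=\; \iota(U_T(\psi))\, U_T,
\]
valid for all measurable $C \subseteq X$ and $\psi \in L_\infty(X,\mu)$, the second being an extension of the first from indicators to arbitrary bounded measurable functions.

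Granted these identities, $(1) \Rightarrow (2)$ is immediate: writing $Q = U_T B$ with $B$ totally fibered and using $BP_C = P_C B$, one computes $QP_C = U_T B P_C = U_T P_C B = P_{T(C)} U_T B = P_{T(C)} Q$. The same calculation with $\iota(\psi)$ in place of $P_C$, now using that $B$ lies in the commutant of the whole algebra $L_\infty(X,\mu)$, gives $(1) \Rightarrow (3)$, and $(3) \Rightarrow (4)$ is trivial. For $(2) \Rightarrow (1)$, I would set $B := U_T^{-1} Q$ and verify that $BP_C = U_T^{-1} Q P_C = U_T^{-1} P_{T(C)} Q = P_C U_T^{-1} Q = P_C B$ for every measurable $C$, so $B$ commutes with every $P_C$, hence is totally fibered, and therefore $Q = U_T B$ is skew-fibered.

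The only step with real content is $(4) \Rightarrow (1)$, and it is handled by exactly the same device. Given a weakly spanning $S \subseteq L_\infty(X,\mu)$ on which $(4)$ holds, set $B := U_T^{-1} Q$ once more; running the calculation used in $(1) \Rightarrow (3)$ in reverse yields
\[
B \iota(\psi) \;=\; U_T^{-1} \iota(U_T(\psi)) Q \;=\; \iota(\psi) U_T^{-1} Q \;=\; \iota(\psi) B
\]
for every $\psi \in S$, so $B$ commutes with $S$ and, by linearity, with the weakly dense linear span of $S$. Invoking the characterization recorded immediately before the proposition, that an operator on $L_2(X,\mu,\Hi)$ is totally fibered iff it commutes with a weakly dense subset of $L_\infty(X,\mu)$, we conclude that $B$ is totally fibered, so $Q = U_T B$ is skew-fibered. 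The only delicate ingredient in the whole argument is this weak-density extension, but it has already been absorbed into the earlier commutant-of-$L_\infty$ characterization, so at the level of the present proposition everything reduces to the two short intertwining calculations above, run in both directions.
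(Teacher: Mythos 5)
Your proof is correct and takes essentially the same route as the paper: the cheap implications are handled by direct intertwining computations, and the substantive reverse direction is settled by observing that $B := U_T^{-1}Q$ is totally fibered (via commutation with all $P_C$, resp.\ with a weakly dense subset of $L_\infty(X,\mu)$). One pedantic remark: the identity you actually use, $U_T P_C = P_{T(C)} U_T$, is the correct one (verified directly from $(U_Tf)(x) = f(T^{-1}x)$) and is what makes item (2) consistent, whereas the relation $U_T^{-1}P_C U_T = P_{TC}$ as literally printed in the paper has the conjugation on the wrong side, so your identities do not quite ``follow immediately'' from it as stated --- but this is a typo in the source, not a gap in your argument.
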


\begin{proof}
The second, third and fourth statements are trivially equivalent. 
 $(1)\Rightarrow(2)$ direction is straightforward, for the reverse we notice that $(U_T)^{-1}Q$ is a totally fibered operator.
\end{proof}

We note that two skew-fibered operators are equal whenever the underlying measure preserving transformations are equal and corresponding totally fibered operators are equal fiberwise. 

Let us fix an essentially free measure preserving action of a countable group $G$ on a standard probability space $(X, \mu)$. We would use the notation $gx$ for the result of the action of element $g$ of group $G$ on element $x$ of space $X$. A fibered representation with that action as a base and Hilbert space $\Hi$ as a fiber is a measurable map $\bp: G \times X \to \Aut(\Hi)$ such that
\begin{enumerate}
\item $\bp(g_1, g_2 x) \circ \bp(g_2, x) = \bp(g_1 g_2, x)$, for all $g_1, g_2 \in G$ and $x \in X$;
\item $\bp(1_G, x) = \Id$, for all $x \in X$;
\item there is a constant $K$ such that $\rvert \bp(g,x) \lvert < K$ for all $g \in G$ and almost every $x \in X$.
\end{enumerate}
We would also call the latter object a representation of the action $G \curvearrowright (X,\mu)$. If we waive the first requirement, the resulting object will be called a {\em quasireprersentation of the action}. We define the {\em defect} of a quasirepresentation $\df(\bp)$ in the following way:
\[
\df(\bp) = \esssup_{x \in X} \sup_{g_1,g_2 \in G} \lVert \bp(g_1 g_2 , x) - \bp(g1, g_2 x) \circ \bp(x,g_2) \rVert.
\]
We will sometimes say that $\bp$ is an {\em $\varepsilon$ - represenation} of action $G \curvearrowright (X,\mu)$ if it is a quasirepresentation with defect smaller than $\varepsilon$.

Note that fibered representation $\bp$ gives rise to a usual representation $\pi$ on the fibered space $L_2(X,\mu, \Hi)$: $(\pi(g)(f))(gx) = \pi(g,x)(f(x))$, for all $g \in G$, $f \in L_2(X, \mu, \Hi)$ and $x \in X$. 
A simplest example of a fibered representation is given by the tensor product of any representation $\tau: G \to \Aut(\Hi)$ and a left Koopman representation given by a measure preserving action of $G$ on $(X, \mu)$. We will denote that fibered representation $(G \curvearrowright X) \otimes \tau $.
The following lemma follows immediately from the properties of skew-fibered operators.

\begin{prop}
Let $\pi$ be a quasirepresentation of a countable group $G$ on a fibered Hilbert space $L_2(X,\mu, \Hi)$ and let $G \curvearrowright (X,\mu)$ be a fixed measure preserving acition.
The following are equivalent:
\begin{enumerate}
\item representation $\pi$ corresponds to a representation $\bp$ of the action $G \curvearrowright (X,\mu)$  on a Hilbert space $\Hi$;
\item for every $g \in G$, the operator $\pi(g)$ is skew-fibered over the measure preserving map $x \mapsto gx$;
\item $\pi(g) P_C = P_{gC} \pi(g)$, for all $g \in G$ and every measurable subset $C$ of $X$;
\item $\pi(g) \psi = U_g(\psi) \pi(g)$, for every $g \in G$ and $\psi \in L_\infty(X,\mu)$;
\item $\pi(g) \psi = U_g(\psi) \pi(g)$, for every $g \in G$ and for a weakly spanning set of $\psi \in L_\infty(X,\mu)$;
\end{enumerate}
\end{prop}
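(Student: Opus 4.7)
The plan is to reduce the whole proposition to Proposition \ref{prop: skew-fibered} applied pointwise in $g$. For each $g \in G$, the map $T_g \colon x \mapsto gx$ is a measure-preserving automorphism of $(X, \mu)$ with Koopman operator $U_g = U_{T_g}$. Applying Proposition \ref{prop: skew-fibered} to $\pi(g)$ relative to $T_g$ gives the equivalence of (2), (3), (4), (5) for that single $g$, and since the quantifier ``for every $g \in G$'' is common to all four statements, the joint equivalence (2) $\Leftrightarrow$ (3) $\Leftrightarrow$ (4) $\Leftrightarrow$ (5) follows immediately.

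For (1) $\Rightarrow$ (2), I would start from the defining formula $(\pi(g) f)(gx) = \bp(g,x)(f(x))$, rewrite it as $(\pi(g) f)(y) = \bp(g, g^{-1} y)(f(g^{-1} y))$, and read off the factorization $\pi(g) = U_g \circ M_g$, where $M_g$ is the totally fibered operator whose explicit fibered form is $x \mapsto \bp(g, x)$; the uniform bound built into the definition of a fibered representation together with Proposition \ref{prop: totally fibered operator} keeps $M_g$ bounded. For (2) $\Rightarrow$ (1), set $M_g := U_g^{-1} \pi(g)$: by hypothesis it is totally fibered, and it is invertible because $\pi(g)$ is. Let $(M_g)_x$ be its explicit fibered form from Proposition \ref{prop: totally fibered operator} and define $\bp(g, x) := (M_g)_x$. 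Measurability in $x$ is provided by that proposition and measurability in $g$ is automatic because $G$ is countable; a uniform norm bound on $\bp$ follows from a uniform bound on $\lVert \pi(g) \rVert$.

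The only step requiring genuine care is recovering the cocycle identity $\bp(g_1 g_2, x) = \bp(g_1, g_2 x) \circ \bp(g_2, x)$ in the case where $\pi$ is a genuine representation. From $\pi(g_1 g_2) = \pi(g_1) \pi(g_2)$ I would rewrite the right-hand side as
\[
U_{g_1} M_{g_1} U_{g_2} M_{g_2} = U_{g_1 g_2} \bigl(U_{g_2}^{-1} M_{g_1} U_{g_2}\bigr) M_{g_2},
\]
observe that $U_{g_2}^{-1} M_{g_1} U_{g_2}$ is totally fibered with explicit fibered form $x \mapsto (M_{g_1})_{g_2 x}$ (a direct unfolding of the Koopman formula), and then compare with the left-hand side $U_{g_1 g_2} M_{g_1 g_2}$ via the uniqueness of the skew-fibered decomposition for invertible operators. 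This yields exactly the desired cocycle identity. I do not anticipate any genuine obstacle; the whole argument is essentially a bookkeeping translation, with the analytic content already packaged in Propositions \ref{prop: totally fibered operator} and \ref{prop: skew-fibered}.
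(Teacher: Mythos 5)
Your proof is correct and follows the same route as the paper, which simply notes that the proposition follows from Proposition \ref{prop: skew-fibered}; you have merely written out the bookkeeping (the factorization $\pi(g) = U_g M_g$ and the cocycle identity via uniqueness of the skew-fibered decomposition) that the paper leaves implicit.
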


\begin{proof}
Trivially follows from proposition \ref{prop: skew-fibered}.
\end{proof}

Let actions of countable groups $G$ and $N$ on $(X,\mu)$ constitute a measurable inclusion of $N$ into $G$; we remind that this means that the $N$-orbit of almost every point in $X$ is a subset of its $G$-orbit.

If $\bp$ is a fibered representation of $G$, then there is a naturally obtained {\em restricted} fibered representation of $N$ (we choose this name because the construction could be considered an analog of restriction of representation construction: a representation of a group could be restricted to a representation of its subgroup).
Indeed, for each element $h$ of $N$ there is a partition $X = \bigsqcup_{g \in G} C_{g,h}$ such that $hx = gx$ for each $g \in G$ and $x \in C_{g,h}$.
The representation is given by the formula $\bar{\rho'}( h,x) = \bp(g,x)$, for $x \in C_{g,h}$. It is easy to check that this representation is uniformly bounded (it is enough to check boundedness fiberwise). 

\begin{lem}\label{lem: fibered unitariser induction}
Suppose that there are actions of groups $G$ and $N$ on a standard probability space $(X,\mu)$ forming an orbit inclusion of $N$ into $G$, such that the $G$-action is ergodic. Suppose that $\bp$ is a fibered over the $G$-action representation of $G$, and $\bar{\rho'}$ is the restriction of that representation into $N$-representation. Suppose that $\pi$ is unitarizable with unitarizing operator $B$ such that $B$ is strongly fibered over $(X,\mu)$. Then $\rho'$ is also unitarizable with the same unitarizing operator.
\end{lem}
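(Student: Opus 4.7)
The goal is to verify the unitarization identity $\rho'(h)^* B \rho'(h) = B$ for every $h \in N$; the positivity bound $\langle B v , v\rangle \geq C \lVert v\rVert^2$ is automatic, since it is a statement about the same operator $B$ on the same Hilbert space. I approach this by translating the identity into a fiberwise statement and then verifying it cell by cell on the partition associated with each element of $N$.

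By Proposition \ref{prop: totally fibered operator}, write $B$ in explicit fibered form $(B_x)_{x \in X}$. Using $(\pi(g) f)(gx) = \bp(g, x) f(x)$ and the change of variables $y = gx$, the unitarization condition $\pi(g)^* B \pi(g) = B$ unpacks to the a.e. identity
\[
\bp(g, x)^* B_{gx} \bp(g, x) = B_x.
\]
Since $G$ is countable, intersecting the corresponding conull sets over $g \in G$ produces a single conull set on which this identity holds for every $g$ simultaneously.

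Now fix $h \in N$ and the associated partition $X = \bigsqcup_{g \in G} C_{g, h}$, with $hx = gx$ on $C_{g, h}$. By the definition of the restricted representation, $\bar{\rho'}(h, x) = \bp(g, x)$ on $C_{g, h}$, whence
\[
\bar{\rho'}(h, x)^* B_{hx} \bar{\rho'}(h, x) = \bp(g, x)^* B_{gx} \bp(g, x) = B_x.
\]
Since the cells $C_{g, h}$ exhaust $X$ up to a null set, this fiberwise identity holds for almost every $x \in X$. Re-integrating undoes the change of variables and delivers $\rho'(h)^* B \rho'(h) = B$, as required.

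The only technical point in writing this out carefully is the null-set bookkeeping: the conull set on which the fiberwise identity for $\pi$ holds uniformly in $g$ arises as a countable intersection of conull sets, one for each $g \in G$, and similarly the identity for $\bar{\rho'}$ is obtained by combining countably many cells. Note that ergodicity of the $G$-action is not needed for this lemma; it will become relevant only when one wishes to produce a totally fibered unitarizer starting from an arbitrary one, which is a separate step.
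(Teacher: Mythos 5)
Your proposal is correct and follows essentially the same route as the paper: the paper's (very terse) proof also reduces the identity $(\rho'(h))^* B \rho'(h) = B$ to a fiberwise equality and checks it via the cell decomposition $X = \bigsqcup_g C_{g,h}$ coming from the orbit inclusion. Your write-up just makes explicit the change-of-variables computation, the countable intersection of conull sets, and the (correct) observation that ergodicity and the positivity bound require no further argument.
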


\begin{proof}

We want to prove that $(\rho'(h))^* B \rho(h) = B$, for all $h \in N$. Note that a conjugate of a skew-fibered operator is a skew-fibered operator. It easy to check now that underlying transformations are the same and that the both sides are equal fiberwise.
\end{proof}

\section{Representations of wreath products}\label{sec: representations of wreath}

Let $A$ and $G$ be groups. Let $N = \bigoplus_G A$ be a direct sum of $G$-indexed copies of $A$.  Let $a^g$ denote the copy of $a \in A$ in the ``coordinte'' $g$ of the direct sum.  The wreath product is  free product $G   * N$ factorized by relations  $t^{-1}(a^g)t = a ^{gt}$ for $a \in A$ and $g,t \in G$.

Let $(X,\mu)$ be a standard probability space and let $\Hi$ be a Hilbert space. We note that there is a natural embedinging of $C^*$-algebras $\iota: L_\infty(X,\mu) \hookrightarrow \bop (L_2(X, \mu, \Hi))$.

Let $A$ be a finite or countable abelian group. Let $(Y, \nu)$ be its Pontryagin dual endowed with the Haar measure. Let $G$ be a countable group. We have the Bernoulli shift-action of $G$ on the space $(X, \mu) = Y^G$ (endowed with the product measure), namely  $(g(t))(h) = t(hg)$, for any $g, h \in G$ and $t \in Y^G$. Note that $Y^G$ is the Potryagin dual of $\bigoplus_G A$, so we may identify the latter with a subset of $L_\infty(X, \mu)$.  Naturally, we obtain a representation of $N$ on the Hilbert space $L_2(X, \mu, \Hi)$. 
For $a \in A$ and $g \in G$ let $a^g$ be the copy the instance of $a$ in the $g$'th copy of $A$.  We note that $\iota(a^g) = U_{g}^{-1} \iota(a^e) U_g$ and $U_g^{-1} \iota(a^h) U_g = \iota(a^{hg})$.

\begin{prop}\label{prop: fibered to wreath}
Assume that $\pi$ is a quasirepresentation of $G$ that is fibered over the Bernoulli action $G \curvearrowright(X, \mu)$ ($X = Y^G$, $Y = \hat{A}$).  
There is a natural extension of $\pi$ into a qusirepresentation  $\pi'$ of $A \wr G$ such that the following holds:

\begin{enumerate}
\item $\pi' \vert_{\bigoplus_G A}$ coincides with $\iota \vert_{\bigoplus_G A}$;
\item $\df(\pi') = \df(\pi)$;
\item $\pi'$ is unitary whenever $\pi$ is unitary; 
\item $\pi'$ is uniformly bounded whenever $\pi$ is (with the same constant); 
\item If $\pi_1$, $\pi_2$ are two representations of $G$ fibered over $G \curvearrowright (X, \mu)$, then $\lVert \pi'_1 -  \pi'_2\rVert = \lVert \pi_1  - \pi_2\rVert $, where $\pi'_1$, $\pi'_2$ are corresponding extensions to $A \wr G$. 
\end{enumerate}

\end{prop}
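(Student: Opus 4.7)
The plan is to define $\pi'$ by exploiting the semidirect product decomposition $A \wr G = (\bigoplus_G A) \rtimes G$: every element has a unique representation as $n g$ with $n \in \bigoplus_G A$ and $g \in G$, and I set
\[
\pi'(n g) := \iota(n)\,\pi(g).
\]
All five items will follow from one key intertwining identity, namely
\[
\pi(g)\,\iota(n) = \iota(g n g^{-1})\,\pi(g), \quad g \in G,\ n \in \bigoplus_G A.
\]
This identity has two ingredients: since $\pi$ is fibered over $G \curvearrowright (X,\mu)$, the previous proposition gives $\pi(g)\psi = U_g \psi U_g^{-1}\,\pi(g)$ for every $\psi \in L_\infty(X,\mu)$, so in particular for $\psi = \iota(n)$; and the Pontryagin duality relations $\iota(a^{hg}) = U_g^{-1}\iota(a^h)U_g$ quoted just before the statement are exactly the wreath product conjugation law $U_g \iota(n) U_g^{-1} = \iota(g n g^{-1})$ in disguise.

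With this identity in hand, items (1), (3), (4), (5) are essentially immediate. For (1), note that $\pi(1_G) = \Id$ is retained by quasirepresentations (only the composition law is waived), so $\pi'(n) = \iota(n)$ and multiplicativity on $\bigoplus_G A$ is inherited from $\iota$. For (3) and (4), observe that $\iota(n)$ is a multiplication operator by a function of modulus one (a character of $Y^G$), hence a fibered unitary by Lemma~\ref{lem: fibered unitary}; therefore $\iota(n)\pi(g)$ is unitary whenever $\pi(g)$ is, and $\lVert \iota(n)\pi(g)\rVert = \lVert \pi(g)\rVert$. For (5), the formula $(\pi_1' - \pi_2')(ng) = \iota(n)(\pi_1(g) - \pi_2(g))$ together with unitarity of $\iota(n)$ gives $\lVert \pi_1' - \pi_2'\rVert = \sup_{g\in G}\lVert \pi_1(g) - \pi_2(g)\rVert = \lVert \pi_1 - \pi_2\rVert$.

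The content of the proposition lies in item (2). The wreath product multiplication reads $(n_1 g_1)(n_2 g_2) = n_1(g_1 n_2 g_1^{-1})\,g_1 g_2$, so
\[
\pi'\bigl((n_1 g_1)(n_2 g_2)\bigr) = \iota(n_1)\,\iota(g_1 n_2 g_1^{-1})\,\pi(g_1 g_2),
\]
whereas applying the key intertwining identity inside
\[
\pi'(n_1 g_1)\pi'(n_2 g_2) = \iota(n_1)\pi(g_1)\iota(n_2)\pi(g_2)
\]
rewrites it as $\iota(n_1)\,\iota(g_1 n_2 g_1^{-1})\,\pi(g_1)\pi(g_2)$. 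Subtracting and using that the leading factor is unitary, the norm of the difference equals $\lVert \pi(g_1 g_2) - \pi(g_1)\pi(g_2)\rVert$, and taking the supremum gives $\df(\pi') = \df(\pi)$ (the $n_i = 1$ case already realizes $\df(\pi)$).

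The only genuinely subtle point is thus the intertwining identity; once it is set up correctly, every item reduces to a one-line calculation. I expect the small pitfalls to be purely notational: making sure the direction of the conjugation $g n g^{-1}$ agrees with the relation $t^{-1}(a^g)t = a^{gt}$ taken from the paper and with the direction of the Pontryagin duality identification $\iota(a^{hg}) = U_g^{-1}\iota(a^h)U_g$. Provided these conventions are tracked carefully, no additional hypothesis on $\pi$ beyond being fibered is needed.
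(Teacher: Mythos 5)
Your proof is correct and follows essentially the same route as the paper: both define $\pi'$ via the semidirect product decomposition and reduce the defect computation to the intertwining identity $\pi(g)\iota(n)=\iota(gng^{-1})\pi(g)$ coming from the skew-fibered property, together with unitarity of the multiplication operators $\iota(n)$. (The paper writes elements as $gh$ and sets $\pi'(gh)=\pi(g)\iota(h)$, but by your intertwining identity this is the same operator as your $\iota(ghg^{-1})\pi(g)$, so the two definitions coincide.)
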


\begin{proof}

Each $\gamma \in A \wr G$ could be presented uniquely as a product $gh$, where $g \in G$ and $h \in N$. We define $\pi'(gh) = \pi(g) \iota(h)$. 
The only slightly nontrivial claim is the one about the defect. It follows from the following chain of expressions ($g_1, g_2 \in G$, $h_1, h_2 \in N$):
\begin{eqnarray*}
\pi'(g_1 g_2 h_1^{g_2} h_2) \\
\pi'(g_1 g_2) (\pi'(g_2))^{-1} \pi'(h_1) \pi'(g_2) \pi'(h_2) \\
\pi'(g_1) \pi'(g_2) (\pi'(g_2))^{-1} \pi'(h_1) \pi'(g_2) \pi'(h_2) \\
\pi'(g_1) \pi'(h_1) \pi'(g_2) \pi'(h_2) \\
\pi'(g_1 h_1) \pi'(g_2 h_2).
\end{eqnarray*}
All pairs of subsequent expressions are equal, except for the second and the third, where the distance is at most the defect of $\pi$.

\end{proof}


\begin{prop}\label{prop: wreath to fibered}
Let $G \curvearrowright (X, \mu)$ (where $X = Y^G$, $Y = \hat{A}$) be a Bernoulli action. Assume that $\bp'$ is a quasirepresentation of $A \wr G$ on $L_2(X, \mu, \Hi)$ such that $\pi' \vert_N = \iota$. Then $\pi = \pi' \vert_G$ is a fibered representation over the $G$-action with fiber $\Hi$.
\end{prop}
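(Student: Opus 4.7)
The plan is to reduce the claim to a commutation check using the weakly-spanning characterization of skew-fiberedness provided by Proposition \ref{prop: skew-fibered} (item 4): an operator $Q$ on $L_2(X, \mu, \Hi)$ is skew-fibered over $T\colon x \mapsto gx$ if and only if $Q\psi = U_g(\psi)\, Q$ for $\psi$ ranging over some weakly spanning subset of $L_\infty(X, \mu)$. Thus it is enough to exhibit, for each $g \in G$, a weakly spanning family of multiplication operators that are intertwined by $\pi'(g)$ with their $U_g$-translates, which will promote each $\pi'(g)$ to a skew-fibered operator over $x \mapsto gx$ and consequently make $\pi = \pi'|_G$ a fibered representation of $G$.

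The convenient family is $\iota(N) \subset L_\infty(X, \mu)$, where $N = \bigoplus_G A$. Because $X = Y^G$ is the Pontryagin dual of $N$, the characters of $N$ span an $L_2$-dense subspace of $L_2(X, \mu)$, hence a weakly dense subspace of $L_\infty(X, \mu)$. Moreover, one has $U_g(\iota(n)) = \iota(g n g^{-1})$ for every $n \in N$: this is recorded in the excerpt for the generators $a^h$ in the identity $U_g^{-1} \iota(a^h) U_g = \iota(a^{hg})$, and it extends to arbitrary $n$ because $\iota|_N$ is a group homomorphism and conjugation by $g$ in $A \wr G$ is an automorphism of $N$ matching the Bernoulli shift action.

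It then remains to establish $\pi'(g)\, \iota(n) = \iota(g n g^{-1})\, \pi'(g)$ for every $g \in G$ and $n \in N$. The starting point is the elementary wreath-product identity $g \cdot n = (g n g^{-1}) \cdot g$, which holds as an equality of group elements in $A \wr G$. Applying $\pi'$ to both sides yields the same operator; expanding the two products via multiplicativity of $\pi'$ and evaluating each $N$-factor exactly through the hypothesis $\pi'|_N = \iota$ delivers precisely the desired intertwining relation, after which the characterization cited in the first paragraph finishes the argument.

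The main obstacle is the defect of $\pi'$: each product expansion costs at most $\df(\pi')$, so the derived relation in fact only holds up to an operator-norm error of order $\df(\pi')\lVert \pi'(g)\rVert$. When $\pi'$ is an honest representation — the case needed for the unitarizability application — this error vanishes, and $\pi = \pi'|_G$ becomes a bona fide fibered representation of $G$ over the Bernoulli action. In the genuine quasirepresentation regime one keeps track of the accumulated defect, obtaining an analogous skew-fibered conclusion up to controlled error, which is what the Ulam-stability application later requires.
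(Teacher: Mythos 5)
Your proof is correct and fills in exactly what the paper's one-line proof (``follows trivially from Proposition~\ref{prop: skew-fibered}'') intends: you take the characters $\iota(N)\subset L_\infty(X,\mu)$ as the weakly spanning family, derive $\pi'(g)\,\iota(n)=\iota(gng^{-1})\,\pi'(g)$ from the wreath-product relation $gn=(gng^{-1})g$ together with $\pi'|_N=\iota$, and invoke item (4) of that proposition. Your caveat about the defect is well taken and is really a remark on the statement rather than a gap in your argument: for a genuine quasirepresentation the intertwining only holds up to an error of order $\df(\pi')$, so exact skew-fiberedness does not follow as literally claimed; this is harmless for the paper because the proposition is only ever applied (in Proposition~\ref{prop: ulam stability wreath product and equivalence}) to the honest representation $\tau_2$, where your exact computation goes through.
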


\begin{proof}
Follows trivially from proposition \ref{prop: skew-fibered}
\end{proof}

\begin{prop}
Let $G$ be a countable group and let $G \curvearrowright (X,\mu)$ be a measure preserving action. Let $C$ be a measurable subset of $X$ such that $\{C , X \setminus C \}$ is a generating partition for the action $G \curvearrowright (X,\ mu)$.
Let $\xi \in \T$, $\xi \neq 1$. If $\pi$ is a quasirepresenation of $G$ on $L_2(X, \mu, \Hi)$ such that $(\pi(g))^{-1} T_h \pi(g) = T_{hg}$ for all $g,h \in G$, where $T_g = \xi P_{gC} + P_{X \setminus gC}$. When $\pi$ is fibered over the action $G \curvearrowright (X, \mu)$ with fiber $\Hi$.
\end{prop}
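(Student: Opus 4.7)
The plan is to deduce that $\pi(g)$ is skew-fibered over the transformation $x \mapsto gx$ for every $g \in G$ via criterion~(4) of Proposition~\ref{prop: skew-fibered}; the conclusion that $\pi$ is fibered over $G \curvearrowright (X,\mu)$ then follows from the preceding proposition on fibered quasirepresentations. Criterion~(4) only requires checking the intertwining $\pi(g)\psi = U_g(\psi)\pi(g)$ on a weakly spanning subset of $L_\infty(X,\mu)$, so the task reduces to identifying such a subset and verifying the intertwining on it.

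The first step is an algebraic simplification. Writing $T_h = I + (\xi - 1)P_{hC}$, the hypothesis $\pi(g)^{-1} T_h \pi(g) = T_{hg}$ unfolds as
\[
I + (\xi - 1)\,\pi(g)^{-1} P_{hC}\,\pi(g) \;=\; I + (\xi - 1)P_{hgC}.
\]
Since $\xi \neq 1$ the scalar $\xi - 1$ is invertible, and we can cancel to obtain the projection-level identity
\[
\pi(g)^{-1} P_{hC}\,\pi(g) = P_{hgC}, \qquad g, h \in G.
\]
This is the one and only place in the argument where the hypothesis $\xi \neq 1$ enters: without it, each $T_h$ would be a scalar multiple of the identity and would carry no information about $C$.

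The second step invokes the generating partition hypothesis. Because $\{C, X \setminus C\}$ generates the $\sigma$-algebra of $(X,\mu)$ under the $G$-action, the family $\{hC : h \in G\}$ generates it modulo null sets, and consequently the indicator functions $\mathbf{1}_{hC}$, together with the constants, linearly span a weakly dense subspace of $L_\infty(X,\mu)$. By criterion~(4) of Proposition~\ref{prop: skew-fibered} it therefore suffices to verify the intertwining $\pi(g)\psi = U_g(\psi)\pi(g)$ on this specific family of indicator functions.

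The third step is simply to read the derived identity as an instance of this intertwining. Rewriting the derived identity as $\pi(g)\,P_{hgC} = P_{hC}\,\pi(g)$ and observing that, as $h$ ranges over $G$, the sets $hgC$ still exhaust the family of translates of $C$, identifying $P_{hC}$ with $P_{g(hgC)}$ via the $G$-action matches exactly the required relation $\pi(g)\mathbf{1}_D = U_g(\mathbf{1}_D)\,\pi(g)$ on the weakly dense subfamily $\{\mathbf{1}_D : D \in \{hgC\}_{h \in G}\}$. Hence each $\pi(g)$ is skew-fibered over $x \mapsto gx$, and by the preceding proposition $\pi$ is fibered over $G \curvearrowright (X,\mu)$ with fiber $\Hi$. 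I do not foresee a substantive obstacle: the only nontrivial ingredient is the scalar cancellation powered by $\xi \neq 1$, and everything else reduces to routine bookkeeping against the generating partition.
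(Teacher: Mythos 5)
The paper itself gives no proof of this proposition, so your argument stands on its own; the overall strategy --- write $T_h = \Id + (\xi-1)P_{hC}$, cancel the invertible scalar $\xi-1$ to extract $\pi(g)^{-1}P_{hC}\,\pi(g) = P_{hgC}$, and then feed this into criterion (4) of Proposition \ref{prop: skew-fibered} --- is clearly the intended one and is sound. There is, however, one step that fails as literally written. You claim that the constants together with the indicators $\mathbf{1}_{hC}$, $h\in G$, \emph{linearly} span a weakly dense subspace of $L_\infty(X,\mu)$. Generating the $\sigma$-algebra is not the same as linearly spanning a weakly dense subspace: for the Bernoulli $(1/2,1/2)$-shift with $C$ the cylinder $\{t : t(e)=1\}$, the function $(\mathbf{1}_{C}-\tfrac12)(\mathbf{1}_{hC}-\tfrac12)$ with $h\neq e$ is a nonzero element of $L_1$ annihilating every constant and every $\mathbf{1}_{h'C}$, so the linear span of your proposed family is not weak-* dense. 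What is weakly spanning is the unital $*$-algebra generated by the $P_{hC}$, i.e.\ the indicators of the Boolean algebra generated by the translates of $C$ --- and that is exactly what the generating-partition hypothesis delivers.

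The fix is immediate and uses only what you already have: conjugation by $\pi(g)$ is multiplicative, so from $\pi(g)^{-1}P_{hC}\,\pi(g)=P_{hgC}$ you get
\[
\pi(g)^{-1}\bigl(P_{h_1C}\cdots P_{h_kC}\bigr)\pi(g)=P_{h_1gC\cap\cdots\cap h_kgC},
\qquad
\pi(g)^{-1}(\Id-P_{hC})\pi(g)=P_{X\setminus hgC},
\]
so the intertwining relation holds on the indicators of the whole Boolean algebra generated by $\{hC\}_{h\in G}$, and \emph{that} family is weakly spanning in $L_\infty(X,\mu)$. With this correction criterion (4) applies and the conclusion follows. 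One further bookkeeping point deserves a sentence in a final write-up: you should check that the single measure-preserving map $x\mapsto gx$ simultaneously implements the correspondence $hC\mapsto hgC$ for all $h$ (mod null sets), so that the intertwining you verify is the one required for skew-fiberedness over $x \mapsto gx$ rather than over $h$-dependent maps; with the paper's (right-action) conventions for the Bernoulli shift this is exactly how the translates $hC$ are defined, but it is worth stating explicitly since the proposition is phrased for a general action.
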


\section{Representations of equivalence relations}\label{sec: reprpresentations of equiv}

Let $E$ be a Borel equivalence relation on a standard probability space $(X,\mu)$ with (at most) countable equivalence classes. We will say that $E$ is {\em measure preserving} if every partial Borel injection $\psi$ whose graph is a subset of $E$ is measure preserving. In the sequel all equivalence relations on standard probability spaces would be Borel with at most countable classes and measure preserving unless otherwise explicitly stated.
The space $E$ could be endowed naturally with a $\sigma$-finite measure: 
\begin{equation*}
\mu^{E}(A) = \int_X \lvert A \cap \{ x\} \times X\rvert d\mu(x) =  \int_X \lvert A \cap  X \times \{ x\}\rvert d\mu(x), \, \text{for } A \subset X.
\end{equation*}
So it makes sense to talk about almost every pair $(x,y) \in E$. 
Denote $\prescript{}{3}{E}$ the set of all triples $(x,y,z)$ such the $(x,y), (y,z) \in E$. In a similar way, we may endow $\prescript{}{3}{E}$ with a $\sigma$-finite measure, so we may talk about almost all triples $(x,y,z) \in \prescript{}{3}{E}$. Note that if $A$ is a conull subset of $E$, then there is a conull subset  $X'$ of $X$ such that $(X' \times X') \cap E \subset A$. Similarly, if $B$ is a conull subset of $\prescript{}{3}{E}$, then there is a conull subset $X'$ of $X$ such that $\prescript{}{3}{E} \cap {X'}^3 \subset B$.

Let $\Hi$ be a Hilbert space. A Borel map $\eta : E \to \Aut(\Hi)$ is called a {\em representation of equivalence realtion} $E$ if there is a conull subset $X'$ of $X$ and a constant $K$ such that 
\begin{enumerate}
\item $\eta(x,x) = \Id$, for almost every $x \in X$;
\item $\lvert \eta(x,y) \rvert \leq K$ for almost every $(x,y) \in E$; 
\item $\eta(y,z) \circ \eta(x, y) = \eta(x,z)$, for almost every $(x,y), (y,z) \in E$.
\end{enumerate}

If we drop the last requirement, we will call the resulting object a {\em quasirepresentation}. 

The defect $\df(\eta)$ of a quasirepresentation $\eta$ is defined as 
\begin{equation*}
\esssup_{(x,y,z) \in \prescript{}{3}{E}} \lVert \eta(y,z) \circ \eta(x, y) - \eta(x,z) \rVert.
\end{equation*}
If $\eta$ is a quasirepresentation and $\df(\eta) \leq \varepsilon$, then we will also call $\eta$ an {\em $\varepsilon$-representation}.

For an equivalence relation $E$ on a standard probability space $(X, \mu)$ and a Hilbert space $\Hi$ we will denote $\Rep(E, \Aut(\Hi))$ the space of all representations, and $\QRep(E, \Aut(\Hi))$ --- the space of all quasirepresentations. We define the norm $\lVert \eta \rVert$ of  quasirepresentation $\eta$ as 
\begin{equation*}
\esssup_{(x,y) \in E} \lVert \eta(x,y) \rVert,
\end{equation*}
and the distance $\lVert \eta_1 - \eta_2\rVert$ between two quasirepresentations as 
\begin{equation*}
\esssup_{(x,y) \in E} \lVert \eta_1(x,y) - \eta_2(x,y) \rVert.
\end{equation*}

It is not hard to see that a fibered representation of a group over its essentially free measure preserving action is the same thing as a representation of the orbit equivalence relation of that action. Indeed we may define 
$\eta(x,y) = \bp(g,x)$, where $x,y \in X$ are in the same $G$-orbit, and $g$ is such unique element of $G$ that $gx = y$, where $\bp$ is a fibered representaion of group $G$ over the essentially free action. The same holds with regards to fibered quasirepresentations.

Equivalence relation $E$ is ergodic if any $E$-closed measurable subset of $X$ has either zero or full measure.
If $E_1 \subset E_2$ are two equivalence relations on a standard probability space $(X,\mu)$ and $E_2$ is ergodic, it makes sense to define the {\em index} $[E_2:E_1]_{(X,\mu)}$ as the number of $E_1$-classes into which the $E_2$-class of $\mu$-almost every point splits.

Given a quasirepresentation $\eta$ of an equivalence relation $E_1$ on $(X,\mu)$ with a fiber $\Hi$, we can induce a quasirepresentation $\eta^{E_2}$ of $E_2$ with fiber $\Hi^{[E_2 : E_1]_{(X,\mu)}}$
(this construction could be considered an analog of the induced representation construction, where from a representation of a subgroup we obtain a repressentation of the containing group).
We need some notation to do this. Let $I$ be the segment $[0, [E_2:E_1]_{(X,\mu)})$ (finite or infinite) of natural numbers.
The fibers $(\lbrace x\rbrace \times X) \cap E_2$, for $x \in X$, split into $E_1$-equivalence classes. We want to pick a measurable system of representatives for these equivalence classes.
To do so, in few moments we will define two measurable functions: $\rp : E_2 \to I$ and $\en : X \times I \to X$ that will satisfy the following properties:

\begin{enumerate}
\item $(\en(x,i), x) \in E_2$, for all $i \in I$ and a.e. $x \in X$;
\item $(\en(x,\rp(x,y)), y) \in E_1$, for a.e. $x \in X$ and every $y \in X$ such that $(x,y) \in E_2$;
\item $\rp(x, \en(x,i)) = i$, for a.e. $x \in X$ and for all $i \in I$.
\end{enumerate}
Given these functions, we are ready to define an induced quasirepresentation $\eta^{E_2}$ of $E_2$ with fiber $\Hi^I$ from representation $\eta$ of $E_1$ with fiber $\Hi$. 

For any $f \in \Hi^I$ and $(x,y) \in E_2$ we define 
\[ 
\left((\eta^{E_2}(x,y))(f)\right)(i) = \eta(\en (x, \rp(x, \en(y,i))), \en(y,i))(\rp(x,\en(y,i))). 
\]
Let us give a more intuitive description. Take function $f$, we imagine that it lies in the fiber over $x$. It has ``coordinates'' $f(j)$ for all $j \in i$. We ``hook'' each such coordinate to the point $(x, \en(x,j))$ of $E_2$. So each coordinate is hooked to a representative of an $E_1$-equivalence class in the intersection $(\{x\} \times X) \cap E_2$. To apply $\eta^{E_2}(x,y)$ to $f$, we look for the representative of the $E_1$-class containing point $(y, \en(x,j))$ in $(\{y\} \times X) \cap E_2$. This representative will be $\en(y,\rp(y, \en(x,j)))$ and have index $\rp(y, \en(x,j)) \in I$.  Now, the change of the index account for the permutation of the coordinates. To handle the change of the representative, we should apply $\eta(\en(x,j), \en(y,\rp(y, \en(x,j))))$ to $f(j)$, and obtain $\rp(y, \en(x,j))$-th coordinate of $(\eta^{E_2}(x,y))(f)$.

Now let us show how to construct functions $\rp$ and $\en$. By the Lusin-Novikov uniformization theorem (see \cite[Theorem 18.10]{Ke95}), there is a sequence of measurable functions $(\psi_i)$ from $X$ to $X$, union of whose graphs is equivalence relation $E_2$. We also require that $\psi_0$ is the identity function. Now, we call a pair $(x,i)$ a new pair if $(\psi_j(x), \psi_i(x)) \notin E_1$, for all $j < i$. We define $\en(x,j)$ to be the $\psi_i(x)$, where $i$ is such that $(x,i)$ is the $j$'th new pair in the sequence $(x,0), (x,1), (x,2) \ldots$.  It is not hard to see that $\en$ is a measurable a.e. defined function. We define $\rp(x,y)$ to be the unique $i$ such that $(\en(x,i), y) \in E_1$.

Note that $\Hi_0$, the subspace of $\Hi^I$ that corresponds to the $0$'th coordinate, is $E_1$-invariant, i.e. there is a full-measure subset $X''$ of $X$ such that $\eta^{E_2}(x,y)(v) \in \Hi_0$, for any $v \in \Hi_0$ and $x,y \in X''$ such that $(x,y) \in E_1$, in fact, $(\eta^{E_2}(x,y)(v))_0 = \eta(x,y)(v_0)$, where $v = (v_0, v_1, \ldots)$ and $v_i = 0$, for $i>0$. We might notice also that the restriction of $\eta^{E_2}$ into a representation of $E_1$ contains $\eta$ as a subrepresentation. 

Another important obseravtion is that the induced representation construction $\eta^{E_2}$ is not unique, but any other induced representation ${\eta^{E_2}}'$ is uniformly equivalent: there is a measurable field of invertible operators $(A_x)_{x \in X}$ such that their norms and norms of their inverses are uniformly bounded almost surely, and $A_y \eta^{E_2}(x,y) A_x^{-1} = {\eta^{E_2}}'(x,y)$, for some conull subset $X'$ of $X$ and every $x,y \in X'$, $(x,y) \in E_2$.

Let us some up the properties of the induction construction. These are easy to check.
\begin{prop}
Let $E_1 \subset E_2$ be measure-preserving equivalence realtions on a standard probability space $(X,\mu)$. Let $\Hi$ be a Hilbert space. Assume that $E_2$ is ergodic. Let us identify $\Hi$ and the zeroth summand of $\Hi^{[E_2 : E_1]}$.
There is a map from $\QRep(E_1,\Aut(\Hi))$ to $\QRep(E_2, \Aut(\Hi^{[E_2:E_1]}))$, $\eta \mapsto \eta^{E_2}$ that has the following properties ($\eta$, $\eta_1$, $\eta_2$ are quasirepresrntions of $E_1$):
\begin{enumerate}
\item $(\eta^{E_2}(x,y)) (v) = (\eta(x,y))(v)$ for almost all $(x,y) \in E_1$ and all $v \in \Hi$;
\item if $\eta$ is unitary then $\eta^{E_2}$ is unitary;
\item $\lVert \eta^{E_2}\rVert = \lVert \eta \rVert$;
\item $\lVert \eta_1^{E_2} - \eta_2^{E_2}\rVert = \lVert \eta_1 - \eta_2\rVert$;
\item $\df(\eta^{E_2}) = \df(E_2)$.
\end{enumerate}
\end{prop}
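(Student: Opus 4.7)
The plan is to extract from the given formula a clean block decomposition of $\eta^{E_2}(x,y)$, after which each of the five properties reduces to a short check. For almost every $(x,y) \in E_2$, set $\sigma_{x,y} : I \to I$, $i \mapsto \rp(x, \en(y,i))$. The three listed defining properties of $\rp$ and $\en$ show that $\sigma_{x,y}$ is a bijection of $I$ with inverse $\sigma_{y,x}$, and that the pair $(\en(x, \sigma_{x,y}(i)), \en(y,i))$ lies in $E_1$ for every $i$. The defining formula for $\eta^{E_2}$ then says exactly that $\eta^{E_2}(x,y)$ permutes fibre coordinates by $\sigma_{x,y}^{-1}$ and applies, on the $i$-th output coordinate, the block operator $\eta(\en(x, \sigma_{x,y}(i)), \en(y,i))$.

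Properties 1 and 2 follow directly from this description. Since $\psi_0$ is the identity we have $\en(x,0) = x$ a.e., and for $(x,y) \in E_1$ the point $y$ lies in the $E_1$-class of $x = \en(x,0)$, so $\rp(x,y) = 0$. Thus $\sigma_{x,y}$ fixes $0$, the $00$-block of $\eta^{E_2}(x,y)$ equals $\eta(x,y)$, and the other blocks acting on $\Hi_0$ vanish, giving property 1. For property 2, a coordinate permutation composed with coordinate-wise unitaries is unitary.

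Properties 3 and 4 reduce to the fact that the operator norm of such a block-permutation equals the supremum of the norms of its blocks. Hence $\lVert \eta^{E_2}(x,y)\rVert = \sup_{i \in I} \lVert \eta(\en(x, \sigma_{x,y}(i)), \en(y,i))\rVert$, and since all the arguments are $E_1$-pairs we obtain $\lVert \eta^{E_2}\rVert \leq \lVert \eta \rVert$; the reverse inequality follows by restricting $(x,y)$ to $E_1$ and invoking property 1. Property 4 is the same argument applied to the pointwise difference of two quasirepresentations.

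For property 5, I would first verify the cocycle identity $\sigma_{x,z} = \sigma_{x,y} \circ \sigma_{y,z}$ for almost every $(x,y,z) \in \prescript{}{3}{E_2}$: both maps send $k$ to the unique index in $x$'s enumeration of the $E_1$-class of $\en(z,k)$. Expanding $\eta^{E_2}(y,z) \circ \eta^{E_2}(x,y)$ and $\eta^{E_2}(x,z)$ coordinate-by-coordinate, their $k$-th outputs differ by a bracket of the form $[\eta(\en(y,m), \en(z,k)) \circ \eta(\en(x,j), \en(y,m)) - \eta(\en(x,j), \en(z,k))](f(j))$, with $m = \sigma_{y,z}(k)$ and $j = \sigma_{x,y}(m)$, and all three representatives lying in one $E_1$-class. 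The norm of the bracket is at most $\df(\eta)$, and Parseval summation (using that $\sigma_{x,z}$ is a bijection of $I$) converts this into $\df(\eta^{E_2}) \leq \df(\eta)$; the reverse inequality again uses property 1 on $E_1$-triples. The only mildly delicate step throughout is justifying the passage from the $E_1$-pairs and triples appearing in these suprema to the full essential suprema over $E_1$ and $\prescript{}{3}{E_1}$, which is a standard null-set argument using the Lusin--Novikov enumeration used in constructing $\en$ and the compatibility of the canonical measures on $E_1 \subset E_2$.
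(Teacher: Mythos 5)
Your proof is correct and is essentially the argument the paper intends: the paper itself only asserts these properties are ``easy to check,'' and your block--permutation decomposition via the cocycle $\sigma_{x,y}(i)=\rp(x,\en(y,i))$ (with $\sigma_{x,y}^{-1}=\sigma_{y,x}$ and $\sigma_{x,z}=\sigma_{x,y}\circ\sigma_{y,z}$) is exactly the structure described in the paper's informal ``hooking'' paragraph, with the null-set passage you flag handled by the saturation remark made just after the definition of $\mu^{E}$. You have also correctly read through the two typos in the source (the missing $f$ in the displayed formula for $\eta^{E_2}$, and $\df(E_2)$ standing for $\df(\eta)$ in item 5), so no changes are needed.
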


\section{Lamplighters over non-amenable groups are non-unitarizable}

Let $E$ be an equivalence relation on a standard probability space $(X,\mu)$. We would say that $E$ is {\em unitarizable} if for every representation $\eta$ of $E$, there is a essentially uniformly bounded field of invertible operators $B_x$ such that their inverses form an essentially uniformly bounded field as well, such that $$\eta^*(x,y) B_y \eta(x,y) = B_y$$ for almost every $y \in X$. We will call $(B_x)$ a {\em unitarizing field} for $\eta$.

\begin{lem}
If $E$ orbitally contains a non-unitarizable group $N$, then $E$ is non-unitarizable.
\end{lem}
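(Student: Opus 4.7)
The plan is to lift a uniformly bounded non-unitarizable representation $\tau:N\to\Aut(\Hi)$ of $N$ to a representation of $E$ via the induction construction of Section~\ref{sec: reprpresentations of equiv}, and then extract from any hypothetical unitarizing field a unitarizing operator for $\tau$, contradicting the choice of $\tau$.

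First I would observe that the orbit equivalence relation $E_N$ of the (essentially free) $N$-action is a sub-equivalence relation of $E$, and that the formula $\eta_N(x,hx):=\tau(h)$ defines a uniformly bounded representation $\eta_N:E_N\to\Aut(\Hi)$. After reducing to the case that $E$ is ergodic (on an ergodic component $X_0$ of $E$ the $N$-action remains essentially free with orbits inside $E$-classes, and any unitarizing field on $E$ restricts to one on $E|_{X_0}$, so non-unitarizability of the restriction is enough), I apply the induction construction to obtain $\eta:=\eta_N^E:E\to\Aut(\Hi^I)$. By the listed properties of the induction, the zeroth summand $\Hi_0\subset\Hi^I$ is $\eta|_{E_N}$-invariant and $\eta|_{E_N}$ acts there exactly as $\eta_N$.

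Now suppose for contradiction that $E$ is unitarizable and pick a unitarizing field $(B_x)$ for $\eta$; after a standard normalization we may assume each $B_x$ is positive with uniform bounds above and below. Let $P_0$ be the orthogonal projection onto $\Hi_0$ and set $\tilde B_x:=P_0 B_x P_0|_{\Hi_0}$; positivity of $B_x$ makes $\tilde B_x$ a positive operator on $\Hi_0\cong\Hi$ with the same uniform bounds. Invariance of $\Hi_0$ under $\eta(x,y)$ for $(x,y)\in E_N$ is equivalent to invariance of $\Hi_0^{\perp}$ under $\eta(x,y)^*$, which gives $P_0\eta(x,y)^*=P_0\eta(x,y)^*P_0$; sandwiching the identity $\eta(x,y)^*B_y\eta(x,y)=B_x$ by $P_0$ then yields
\[
\tau(h)^*\tilde B_{hx}\tau(h)=\tilde B_x \qquad \text{for a.e.\ } x\in X \text{ and all } h\in N.
\]
Setting $B:=\int_X \tilde B_x\,d\mu(x)$ (a weak integral, legitimate thanks to uniform positivity) produces a positive invertible bounded operator on $\Hi$ with bounded inverse, and measure preservation of the $N$-action upgrades the pointwise identity to $\tau(h)^*B\tau(h)=B$ for all $h\in N$, contradicting the non-unitarizability of $\tau$.

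The two main technical points I expect to require care are the ergodicity reduction, since the paper's induction construction is phrased assuming an ergodic ambient relation and a well-defined index, and the upgrade of a generic unitarizing field with bounded inverse to a positive one; the latter should follow a standard argument exploiting uniform boundedness of $\eta$ to pass from an invertible invariant sesquilinear form to a positive one. Once these are in place, the compression argument above is essentially formal and reduces the whole lemma to unwinding the induction.
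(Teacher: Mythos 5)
Your argument is correct and follows essentially the same route as the paper's proof: lift $\tau$ to a representation of the orbit relation $E_N$ via the tensor-product construction, induce to $E$, compress the unitarizing field to the invariant zeroth copy of $\Hi$, and descend to a unitarizing operator for $\tau$ (the paper phrases this last step as ``$\tau$ is a subrepresentation of the unitarizable $\pi$ on $L_2(X,\mu,\Hi)$,'' which amounts to exactly your averaging of $\tilde B_x$ over $X$). The two technical points you flag --- the ergodicity reduction needed for the index and induction to make sense, and the positivity normalization needed so that the compression $P_0 B_x P_0$ and the average remain bounded below --- are genuine but are also passed over silently in the paper, so your write-up is if anything the more careful of the two.
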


\begin{proof}
Assume that $E$ is unitarizable, Let $\tau$ be any uniformly-bounder representation of $N$ on a Hilbert space $\Hi$. Consider the action of $N$ on $(X, \mu)$ given by the orbit inclusion of $N$ into $E$. Denote $E_N$ the orbit equivalence relation of that action. We may construct the tensor-product representation $\bp$ of the action $N \curvearrowright (X, \mu)$ and from that a representation $\rho$ of the equivalence relation $E_N$. Now let $\rho^{[E : E_N]}$ be the induced representation of $E$ on the space $\Hi^{[E:E_N]}$.  Let $(B_x)$ be the unitarizing operator for $\rho^{[E : E_N]}$.Note that the copy of $\Hi$ corresponding to the 0'th coordinate in $\Hi^{[E: E_N]}$ is $\rho^{[E : E_N]}(x,y)$-invariant for a.e. $(x,y) \in E_N$, we denote $P_\Hi$ the projector from $\Hi^{[E:E_N]}$ to said copy  It is not hard to check that $P_\Hi B_x P_\Hi$ forms a unitarizing field for $\rho$. Now it is easy to check that $B$ is a unitarizing operator for the representation $\pi$ of $N$ corresponding to the representation $\bp$ of $N \curvearrowright (X, \mu)$. We see that $\pi$ is unitarizable, but it contains $\tau$ as a subrepresentation, so $\tau$ is unitarizable as well.
\end{proof}

\begin{proof}[Proof of Theorem \ref{th: main}]
Unitarizability passes to subgroups, so it is enough to show that $A \wr G$ is non-unitarizable for any countable or finite abelian group $A$ and any countable non-amenable group $G$. Consider the Bernoulli action $G \curvearrowright (X,\mu)$ ($X = Y^G$, $Y = \hat{N}$, $N = \bigoplus_G A$) from Section \ref{sec: representations of wreath}.  Assume that $A \wr G$ is unitarizable. We will prove that $E$ is unitarizable as well. Let $\eta$ be any representation of $E$ on a Hilbert space $\Hi$. From this we get a uniformly-bounded representation $\pi$ of $G$ on $L_2(X, \mu, \Hi)$. We extend the latter to representation $\pi'$ of $A \wr G$ as described in Section \ref{sec: representations of wreath}. trivially, $\pi'$ is uniformly bounded, so it is unitarizable. Let $B$ be the unitarizing operator for $\pi'$. By definition of the unitarizing operator, we may see that $B$ commutes with  $\pi'(h)$ for all $h \in N = \bigoplus_G A$ , so $B$ is totally fibered. It is then not hard to check that its explicit fibered form $(B_x)$ is a unitarizing field for $\eta$.
\end{proof}

\section{Lamplighters have non strongly rigid representations}

Let $G$ be a countable group. We say that it is {\em deformation rigid} if for every $\varepsilon > 0$ there is $\delta > 0$ such that any two unitary representations $\rho_1, \rho_2$ on the same Hilbert space such that $\lVert \rho_1 - \rho_2 \rVert < \delta$ are congugated by a unitary operator $U$ satisfying  $\lVert \Id - U\rVert < \varepsilon$.

A unitary representation $\rho$ of a countable group $G$ on a Hilbert space $\Hi$ is called {\em strongly rigid} if for every $\varepsilon > 0$ there is $\delta > 0$ such that for any unitary representation $\rho_1$ of $G$ on $\Hi$ such that $\lVert \rho - \rho_1 \rVert < \delta$ there is unitary operator $U$ satisfying $\lVert \Id - U \rVert < \varepsilon$ that conjugates the two represetations. Note that it is a priori weaker condition on a group that all of its representaions are strongly rigid, than that of deformation rigidity of the group, since the latter requires uniformity.

A unitary representation $\rho$ of a measure preserving action $G \curvearrowright (X, \mu)$ on a Hilbert space $\Hi$ is called strongly rigid if for every $\varepsilon > 0$ there is $\delta > 0$ such that every unitary representations $\rho'$ of that action on Hilbert space $\Hi$ satisfying $\lVert \rho - \rho' \rVert < \delta$, is conjugated to $\rho$ by a unitary operator $U$ such that $\lVert \Id - U\rVert < \varepsilon$.

Let $E$ be an equivalence relation on a standard probability space $(X,\mu)$. We will say that two representations $\eta_1$ and $\eta_2$ of $E$ are {\em conjugated} by a measurable field of operators $(U_x)_{x \in X}$ if for a.e. $(x,y) \in E$ we have $\eta_1(x,y) U_x = U_y \eta_2(x,y)$.
A unitary representation $\eta$ on a Hilbert space $\Hi$ of an equivalence relation $E$ on a standard probability space $(X,\mu)$ is called {\em strongly rigid} if for every $\varepsilon > 0$ there is $\delta > 0$ such that every unitary representation $\eta_1$ of $E$ on $\Hi$ satisfying $\lVert \eta - \eta_1\rVert > 0$ is conjugated to $\eta$ by a measurable field of unitary operators $(U_x)_{x \in X}$ such that $\lVert \Id - U_x \rVert < \varepsilon$ for a.e. $x \in X$. 


\begin{prop}
Let $E_1 \subset E_2$ be two equivalence relations on $(X,\mu)$. If all unitary representations of $E_2$ are strongly rigid, then all unitary representations of $E_1$ are strongly rigid.
\end{prop}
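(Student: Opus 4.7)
The plan is to lift the problem from $E_1$ to $E_2$ via the induced-representation functor from Section \ref{sec: reprpresentations of equiv}, use the hypothesized strong rigidity on $E_2$, and then compress back to $E_1$ using that $\eta$ sits inside $\eta^{E_2}$ as an invariant subrepresentation along $E_1$.

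Given a unitary representation $\eta$ of $E_1$ on $\Hi$ and $\varepsilon > 0$, I would first form the induced unitary representation $\eta^{E_2}$ on $\Hi^{I}$, where $I = [0,[E_2:E_1])$ (if $E_2$ is not ergodic, run the construction on each ergodic component separately). By the assumed strong rigidity of $\eta^{E_2}$ applied with an auxiliary small parameter $\varepsilon'$, there is a $\delta_0 > 0$ such that any unitary representation of $E_2$ within $\delta_0$ of $\eta^{E_2}$ is conjugated to $\eta^{E_2}$ by a measurable field of unitaries $(U_x)$ with $\lVert U_x - \Id\rVert < \varepsilon'$ a.e.

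Now, taking any unitary $\eta_1$ of $E_1$ with $\lVert \eta - \eta_1 \rVert < \delta := \delta_0$, the identity $\lVert \eta_1^{E_2} - \eta^{E_2}\rVert = \lVert \eta_1 - \eta\rVert$ from the induction proposition delivers a field $(U_x)$ intertwining $\eta_1^{E_2}$ and $\eta^{E_2}$. To descend, let $\iota : \Hi \to \Hi^I$ be the inclusion onto the $0$'th coordinate and $P : \Hi^I \to \Hi$ the orthogonal projection; define $V_x := P U_x \iota$. Using that the $0$'th coordinate subspace $\Hi_0$ is $\eta^{E_2}(x,y)$- and $\eta_1^{E_2}(x,y)$-invariant for $(x,y) \in E_1$ (hence its orthogonal complement is invariant too, by unitarity), and that both representations restrict to $\eta$ and $\eta_1$ respectively on $\Hi_0$, a direct calculation yields $V_y \eta_1(x,y) = \eta(x,y) V_x$ for a.e.\ $(x,y) \in E_1$, together with $\lVert V_x - \Id_{\Hi} \rVert \leq \lVert U_x - \Id\rVert < \varepsilon'$.

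The main obstacle, and the last step, is to upgrade the bounded-but-not-unitary $V_x$ to a genuine unitary conjugation. Since $V_x$ is close to the identity, polar-decompose $V_x = W_x \lvert V_x \rvert$ with $W_x$ unitary. I would then check that taking adjoints in the intertwining identity and using unitarity of $\eta$ and $\eta_1$ gives $\eta_1(x,y)^* V_y^* V_y \eta_1(x,y) = V_x^* V_x$, hence $\lvert V_y \rvert \eta_1(x,y) = \eta_1(x,y) \lvert V_x \rvert$; dividing out the positive parts, $W_x$ inherits the intertwining property $W_y \eta_1(x,y) = \eta(x,y) W_x$. Standard functional-calculus estimates bound $\lVert W_x - \Id \rVert$ linearly in $\lVert V_x - \Id\rVert$, so choosing $\varepsilon'$ sufficiently small in terms of $\varepsilon$ forces $\lVert W_x - \Id\rVert < \varepsilon$ and establishes strong rigidity of $\eta$. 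Everything else is bookkeeping with the induction functor.
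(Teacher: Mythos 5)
Your proposal is correct and follows essentially the same route as the paper: induce from $E_1$ to $E_2$, invoke strong rigidity of $\eta^{E_2}$, and descend by compressing the conjugating field to the invariant $0$'th-coordinate copy of $\Hi$ and taking the unitary part of the polar decomposition. The compression-plus-polar-decomposition step you carry out inline is exactly the content of the lemma the paper states immediately after this proposition (adapted from Lemma 4.5 of Burger--Ozawa--Thom), and your intertwining computation for $\lvert V_x\rvert$ is in fact written more carefully than the paper's.
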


The latter proposition trivially follows form the induced representation construction together with the following lemma that incapsulates the argument from Proposition 4.4.2 and Lemma 4.5 of \cite{BOT13} .

\begin{lem}
Let $E$ be an equivalence relation on a standard probability space $(X,\mu)$. For every $\varepsilon > 0$ there is a $\delta > 0$ such that the following holds. Let $\Hi \subset \Hi'$ be two Hilbert spaces. Let $\eta_1, \eta_2$ be two unitary representations of $E$ on $\Hi'$ such that $\Hi$ is an invariant subspace for both and that they are conjugated by a measurable field of unitary operators $(U_x)_{x \in X}$ satisfying $\lVert U_x - \Id\rVert < \delta$. Then restrictions of $\eta_1 \vert_{\Hi}$ and $\eta_2 \vert_{\Hi}$ are conjugated by the unitary field of unitary operators $(V_x)_{x \in X}$, where $V_x$ is the ``sign'' part of the polar decomposition of $PU_xP$, and $P$ is the projector from $\Hi'$ onto $\Hi$. In addition, $\lVert V_x - \Id_{\Hi}\rVert < \varepsilon$ for a.e. $x \in X$.
\end{lem}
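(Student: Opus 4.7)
My plan is to apply the classical ``polar-part compression'' argument familiar from perturbation theory of unitary operators. First I would observe that since $\eta_1, \eta_2$ are unitary representations and $\Hi$ is invariant under both, the orthogonal complement $\Hi^\perp$ is also invariant, so the projection $P$ commutes with $\eta_i(x,y)$ for $i = 1, 2$ and a.e.\ $(x,y) \in E$. I then set $A_x = P U_x P$, viewed as an operator $\Hi \to \Hi$. For any $v \in \Hi$, $A_x v - v = P(U_x v - v)$, so $\lVert A_x - \Id_\Hi \rVert \leq \lVert U_x - \Id \rVert < \delta$. Provided $\delta < 1$ this operator is invertible; its polar decomposition $A_x = V_x \lvert A_x \rvert$ is thus well defined, with $V_x$ unitary on $\Hi$, and depends measurably on $x$ since the polar part is a continuous function of the invertible operator.

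Next I would extract the intertwining relation. Starting from $U_y \eta_1(x,y) = \eta_2(x,y) U_x$, applying $P$ on both sides and using $\eta_i(x,y) P = P \eta_i(x,y) P = P \eta_i(x,y)$ yields
\[
A_y\, \eta_1(x,y)\vert_\Hi = \eta_2(x,y)\vert_\Hi\, A_x.
\]
Abbreviating $T_i = \eta_i(x,y)\vert_\Hi$, taking adjoints and composing gives $T_1^* \lvert A_y \rvert^2 T_1 = \lvert A_x \rvert^2$, hence $\lvert A_y \rvert^2 T_1 = T_1 \lvert A_x \rvert^2$; the continuous functional calculus (applied to the square-root function, whose continuity is uniform away from $0$) upgrades this to $\lvert A_y \rvert T_1 = T_1 \lvert A_x \rvert$. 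Substituting $A_y = V_y \lvert A_y \rvert$ and $A_x = V_x \lvert A_x \rvert$ into the intertwining relation and cancelling the invertible factor $\lvert A_x \rvert$ produces $V_y T_1 = T_2 V_x$, which is exactly the required conjugation for the restricted representations.

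For the norm bound I would write $V_x - \Id_\Hi = (A_x - \lvert A_x \rvert)\lvert A_x \rvert^{-1}$; a direct expansion gives $\lVert A_x^* A_x - \Id_\Hi \rVert \leq \delta(2+\delta)$, which forces $\lvert A_x \rvert$ to have spectrum contained in $[\sqrt{1-\delta(2+\delta)},\, \sqrt{1+\delta(2+\delta)}]$, and hence both $\lVert \lvert A_x \rvert - \Id_\Hi \rVert$ and $\lVert \lvert A_x \rvert^{-1} \rVert - 1$ are $O(\delta)$. Combining, $\lVert V_x - \Id_\Hi \rVert = O(\delta)$ uniformly in $x$, so given $\varepsilon > 0$ we can choose $\delta$ small enough that this is less than $\varepsilon$ almost surely. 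The only delicate point, more a matter of bookkeeping than of genuine difficulty, is verifying measurability of $x \mapsto V_x$: this is immediate since the polar decomposition is a Borel (indeed continuous) map on a norm-neighbourhood of $\Id$ in the invertible operators, and all quantities constructed above are obtained from $x \mapsto U_x$ by composition with such Borel maps.
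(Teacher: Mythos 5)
Your argument is correct and follows essentially the same route as the paper's proof: compress $U_x$ to $PU_xP$, use unitarity of the $\eta_i$ to show that the modulus $\lvert PU_xP\rvert$ intertwines the restricted representation with itself, cancel this invertible positive factor to conclude that the polar part $V_x$ conjugates $\eta_1\vert_{\Hi}$ and $\eta_2\vert_{\Hi}$, and control $\lVert V_x - \Id_{\Hi}\rVert$ via the behaviour of the polar decomposition near the identity. The only differences are immaterial: you use the opposite convention for the intertwining relation, and your explicit $O(\delta)$ estimate is more quantitative than the paper's appeal to norm-continuity of the polar-part map.
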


\begin{proof}
Let $W_x = \big\lvert P U_x P\vert_\Hi \big\rvert$. Observe that for a.e. $x,y$ we have $\eta_1(x,y) P U_x P= P U_y P \eta_2(x,y)$ and $\eta_2(x,y) P U^*_x P = P U^*_yP \eta_1(x,y)$. So $\eta_2(x,y) W_x = W_x \eta_2(x,y)$. Using the fact that $P U_x P \vert_\Hi = V_x W_x$, we get that the field of operator $(V_x)$ conjugates $\eta_1 \vert_\Hi$ and $\eta_2\vert_\Hi$ as soon as $\delta$ is small enough (so that $W_x$ is invertible). Since in $V_x$ is obtained by a norm-continuous in the neighbourhood of $Id_{\Hi'}$ map those value on $\Id_{\Hi'}$ is $\Id_{\Hi}$, we obtain the desired.
\end{proof}

\begin{cor}\label{cor: rigidity induction for equivalence}
If $E_1 \subset E_2$ are two equivalence relations on a standard probability space, and $E_2$ is ergodic and $E_1$ has a not strongly rigid representation, then $E_2$ has a not strongly rigid representation.
\end{cor}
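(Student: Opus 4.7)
The plan is to start from a not strongly rigid unitary representation $\eta$ of $E_1$ on a Hilbert space $\Hi$, induce it (together with a nearby perturbation) to $E_2$, and then use the preceding lemma to pull any hypothetical conjugating field on $\Hi^{[E_2:E_1]}$ back down to a conjugating field on $\Hi$.

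Fix an $\varepsilon_0>0$ witnessing the failure of strong rigidity of $\eta$, meaning that for every $\delta>0$ there is a unitary representation $\eta'$ of $E_1$ on $\Hi$ with $\lVert\eta-\eta'\rVert<\delta$ which is \emph{not} conjugated to $\eta$ by any measurable field $(V_x)$ of unitaries satisfying $\lVert V_x-\Id_{\Hi}\rVert<\varepsilon_0$ a.e. Feed $\varepsilon_0$ into the preceding lemma to obtain a corresponding $\delta_0>0$. I claim that the induced representation $\eta^{E_2}$ of $E_2$ on $\Hi^{[E_2:E_1]}$ is not strongly rigid, with $\delta_0$ serving as the witness: that is, for every $\delta'>0$ there exists a unitary representation of $E_2$, $\delta'$-close to $\eta^{E_2}$, that cannot be conjugated to $\eta^{E_2}$ by any field of unitaries within $\delta_0$ of the identity.

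To establish this, fix $\delta'>0$, pick $\eta'$ as above with $\lVert\eta-\eta'\rVert<\delta'$ that is not $\varepsilon_0$-conjugate to $\eta$, and induce to obtain ${\eta'}^{E_2}$. By property (4) of the induction proposition in Section \ref{sec: reprpresentations of equiv}, $\lVert\eta^{E_2}-{\eta'}^{E_2}\rVert=\lVert\eta-\eta'\rVert<\delta'$. Suppose for contradiction that the two induced representations are conjugated by a measurable field of unitaries $(U_x)_{x\in X}$ with $\lVert U_x-\Id\rVert<\delta_0$ a.e. Now recall from the construction of induction that the zeroth summand $\Hi_0\cong\Hi$ of $\Hi^{[E_2:E_1]}$ is invariant under the restriction of $\eta^{E_2}$ to $E_1$, and that this restriction acts on $\Hi_0$ exactly as $\eta$; the same applies to ${\eta'}^{E_2}$ and $\eta'$. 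Apply the preceding lemma with the ambient space $\Hi'=\Hi^{[E_2:E_1]}$, the invariant subspace $\Hi_0$, and the field $(U_x)$ restricted to $E_1$: the polar-decomposition construction yields a measurable field of unitaries $(V_x)$ on $\Hi_0$, satisfying $\lVert V_x-\Id_{\Hi_0}\rVert<\varepsilon_0$ a.e., that conjugates $\eta$ and $\eta'$. This contradicts the choice of $\eta'$.

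The only delicate point, which is where ergodicity of $E_2$ is used, is that the induction construction requires a well-defined index $[E_2:E_1]_{(X,\mu)}$ and a measurable system of $E_1$-representatives inside each $E_2$-class; ergodicity of $E_2$ guarantees the former and the Lusin--Novikov argument from Section \ref{sec: reprpresentations of equiv} guarantees the latter. The remaining ingredients (norm preservation under induction, invariance of $\Hi_0$ under the $E_1$-restriction, and coincidence of the $E_1$-restriction on $\Hi_0$ with the original representation) are exactly the bookkeeping properties of the induction proposition, and the pull-back of the conjugating field is supplied verbatim by the preceding lemma.
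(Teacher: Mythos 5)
Your argument is correct and is exactly the route the paper takes: its one-line proof (``use the induction construction together with the previous lemma'') unpacks to precisely your quantifier bookkeeping --- induce $\eta$ and a witnessing perturbation $\eta'$ to $E_2$ via the same system of representatives, invoke norm preservation of induction, and pull any near-identity conjugating field back to $\Hi_0$ by the polar-decomposition lemma to reach a contradiction. You have also correctly identified where ergodicity of $E_2$ enters (well-definedness of the index), so nothing is missing.
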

\begin{proof}
We use the induction construction together with the previous lemma.
\end{proof}

\begin{lem}
Asuume a countable group $H$ has a not strongly rigid representation. 
Let $E_1$ be the orbit equivalence relation of an essentially free measure preserving action $H \curvearrowright (X,\mu)$ of a group $H$ on a standard probability space $(X,\mu)$. There is a non strongly rigid representation of $E_1$. 
\end{lem}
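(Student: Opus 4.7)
The plan is to lift the given not-strongly-rigid unitary representation $\rho$ of $H$ on a Hilbert space $\Hi$ to a representation of $E_1$ by a ``constant fibered'' formula. By essential freeness of the action, for a.e. $x \in X$ the map $h \mapsto hx$ is injective, so we may unambiguously set
\[
\eta(x, hx) := \rho(h)
\]
on a conull subset of $E_1$; this is easily checked to be a unitary representation of $E_1$. I claim $\eta$ is not strongly rigid, which proves the lemma.

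The key observation is that for any unitary representation $\rho_1$ of $H$ on $\Hi$, the analogous formula $\eta_1(x, hx) := \rho_1(h)$ defines a unitary representation of $E_1$, and $\lVert \eta - \eta_1 \rVert = \lVert \rho - \rho_1 \rVert$. So the problem reduces to the following quantitative averaging claim: if $\eta$ and $\eta_1$ are conjugated by a measurable field of unitaries $(V_x)_{x\in X}$ with $\esssup_x \lVert V_x - \Id \rVert \le \varepsilon_1$, then there is a single unitary $U \in \Aut(\Hi)$ with $\lVert U - \Id\rVert \le C\varepsilon_1$ (for a universal constant $C$) satisfying $U \rho_1(h) U^{-1} = \rho(h)$ for all $h$. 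Granting this claim, a direct $\varepsilon$-$\delta$ chase gives the contradiction: fix a witness $\varepsilon_0 > 0$ to non-strong-rigidity of $\rho$, let $\varepsilon_1 := \varepsilon_0/(2C)$, apply strong rigidity of $\eta$ at scale $\varepsilon_1$ to obtain $\delta > 0$, and pick $\rho_1$ with $\lVert \rho - \rho_1 \rVert < \delta$ admitting no unitary conjugator within $\varepsilon_0$ of $\Id$; the induced $\eta_1$ is within $\delta$ of $\eta$ but any conjugating field of norm $<\varepsilon_1$ averages to such a unitary, contradiction.

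To prove the averaging claim, define $W \in \bop(\Hi)$ by the weak integral
\[
\langle W v, w \rangle \;=\; \int_X \langle V_x v, w \rangle \, d\mu(x), \qquad v, w \in \Hi.
\]
Then $\lVert W - \Id \rVert \le \varepsilon_1$. Using the conjugation identity $\rho(h) V_x = V_{hx} \rho_1(h)$ and the fact that the action of $H$ preserves $\mu$, a change-of-variable computation gives
\[
\langle \rho(h) W v, w\rangle = \int \langle V_{hx} \rho_1(h) v, w\rangle \, d\mu(x) = \langle W \rho_1(h) v, w\rangle,
\]
so $\rho(h) W = W \rho_1(h)$ for every $h$. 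For $\varepsilon_1 < 1$, $W$ is invertible; taking adjoints yields $W^* \rho(h) = \rho_1(h) W^*$, hence $W^*W$ commutes with $\rho_1(h)$. Writing the polar decomposition $W = U |W|$ with $|W| = (W^*W)^{1/2}$, the positive factor $|W|$ also commutes with $\rho_1(h)$ by functional calculus, so cancelling it from $\rho(h) U |W| = U |W| \rho_1(h) = U \rho_1(h)|W|$ gives $\rho(h) U = U \rho_1(h)$ as desired.

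The only nontrivial quantitative point is the bound on $\lVert U - \Id\rVert$. From $\lVert W^*W - \Id\rVert \le (\lVert W\rVert + 1)\varepsilon_1 \le 3\varepsilon_1$ and the scalar inequality $|\sqrt{t}-1| \le |t-1|$ on $t \ge 0$, applied by functional calculus to the positive operator $W^*W$, one gets $\lVert |W| - \Id\rVert \le 3\varepsilon_1$, and then $\lVert U - \Id\rVert \le \lVert W - |W|\rVert \cdot \lVert |W|^{-1}\rVert + \lVert W - \Id\rVert$ is controlled by a universal multiple of $\varepsilon_1$ for small $\varepsilon_1$. The main obstacle is purely bookkeeping: setting up the polar-decomposition estimate with an explicit constant $C$ so that the $\varepsilon_0 \mapsto \varepsilon_0/(2C)$ substitution above really produces a contradiction.
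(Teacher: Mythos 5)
Your proof is correct and is essentially the paper's argument: both build the constant (tensor-product) fibered representation $\eta(x,hx)=\rho(h)$ and then convert a conjugating measurable field of unitaries close to $\Id$ into a single unitary conjugating $\rho$ and $\rho_1$ via polar decomposition. Your averaging operator $W=\int_X V_x\,d\mu(x)$ is exactly the compression $P_\Hi U P_\Hi\vert_\Hi$ of the corresponding totally fibered unitary to the subspace $1\otimes\Hi\subset L_2(X,\mu,\Hi)$ that the paper uses (citing Lemma 4.5 of \cite{BOT13} for the quantitative polar-decomposition step you carry out by hand), so the two proofs differ only in presentation.
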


\begin{proof}
Let $\tau$ be a not strongly rigid representation of $H$ on $\Hi$. This means that there is a constant $c>0$ and a sequence of representations $(\tau_i)$ such that $\lVert \tau - \tau_i\rVert \to 0$ and for any unitary operator $U$ on $\Hi$ that conjugates $\tau$ and $\tau_i$ holds $\lVert \Id_\Hi - U\rVert > c$.
Let $\bp$ be the representation of $H \curvearrowright (X,\mu)$  obtained from the tensor product construction $\tau \otimes (H \curvearrowright (X,\mu))$, let $\pi$ be the corresponding representation of $H$ on $L_2(X,\mu, \Hi)$. In a similar way we obtain representations $\bp_i$ of $H \curvearrowright (X,\mu)$ and representations $\pi_i$ of $H$ on $L_2(X, \mu, \Hi)$. We note that $\lVert \pi_i - \pi\rVert \to 0$. 

I claim that there is a $c'>0$ such that for any unitary $U$ that conjugates $\pi$ and $\pi_i$ holds $\lVert \Id_{L_2(X,\mu, \Hi)} - U\rVert > c'$. Assuming the contrary we may refine a subsequence such that there are unitary operators $U_i$ that conjugate $\pi$ and $\pi_i$  such that $U_i \to \Id_{L_2(X,\mu, \Hi)}$. Note that representation $\tau$ is a subrepresentation of $\pi$ and each $\tau_i$ is a subrepresentation of $\pi_i$, moreover the subspaces from the definition of a subrepresentation are the same copy of $\Hi$ given by the natural embedding $\Hi \to L_2(X, \mu, \Hi)$. For each $i$ let $V_i$ be the sign part of the polar decomposition of the restriction $(P_\Hi U_i P_\Hi) \vert_\Hi$. Note that $V_i$ conjugates $\tau$ and $\tau_i$ and that, by Lemma 4.5 from \cite{BOT13}, $\lVert U_i - \Id_\Hi\lVert  \to 0$. A contradiction. 

In the claim above we proved that there is no sequence of unitary operators tending to $\Id$ yet alone a sequence of totally fibered unitary operators (=measurable  fields of unitary operators that would conjugate $\bp$ and $\bp_i$ and hence the correpsonding representations of $E_1$).
\end{proof}

\begin{lem}\label{lem: non-rigid from group to orbitally containing relation}
If $E$ is an ergodic equivalence relation that orbitally contains a group with a non strongly rigid representation, then $E$ has a not strongly rigid representation.
\end{lem}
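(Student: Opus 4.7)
The plan is to combine the previous lemma with Corollary \ref{cor: rigidity induction for equivalence} in a straightforward way. By hypothesis, $E$ orbitally contains a countable group $H$ with a non strongly rigid unitary representation. Unpacking the definition of orbital containment, this provides an essentially free measure-preserving action $H \curvearrowright (X, \mu)$ such that, on a conull subset of $X$, every $H$-orbit sits inside the corresponding $E$-equivalence class. Let $E_1$ denote the orbit equivalence relation of $H \curvearrowright (X,\mu)$; then $E_1 \subset E$ up to a null set, so after discarding a null set we have genuine containment.

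First I would apply the preceding lemma to $H \curvearrowright (X,\mu)$ to produce a non strongly rigid unitary representation of $E_1$. This is already done for us: the lemma above takes a non strongly rigid representation of $H$, tensors it with the Koopman construction along $H \curvearrowright (X, \mu)$, and observes that the resulting fibered representation -- which corresponds exactly to a representation of $E_1$ -- cannot be strongly rigid.

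Next I would invoke Corollary \ref{cor: rigidity induction for equivalence} with the pair $E_1 \subset E$: since $E$ is ergodic by hypothesis and $E_1$ has a non strongly rigid representation, the corollary immediately yields that $E$ has a non strongly rigid representation. This completes the proof.

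There is essentially no obstacle: the whole content of the lemma is packaged into the two pieces of machinery already established (transport of non-rigidity from the group to the orbit equivalence relation, and induction of non-rigidity from a subrelation to a containing ergodic relation via the induced representation construction together with the polar-decomposition lemma). The only point requiring a line of care is checking that one may realize the $H$-action on the \emph{same} base space $(X,\mu)$ on which $E$ lives, so that $E_1 \subset E$ literally holds; this, however, is exactly the definition of orbital containment used throughout the paper.
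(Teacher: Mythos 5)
Your proposal is correct and follows exactly the paper's own argument: the paper likewise proves this lemma by combining the preceding lemma (transporting a non strongly rigid representation of $H$ to one of the orbit equivalence relation $E_1$ of the orbitally included action) with Corollary \ref{cor: rigidity induction for equivalence} applied to $E_1 \subset E$. Your additional remark about realizing the $H$-action on the same base space is a reasonable point of care but is indeed built into the definition of orbital containment.
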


\begin{proof}
A consequence of the previous lemma and Corolary \ref{cor: rigidity induction for equivalence}.
\end{proof}

\begin{lem}
If $E$ is an orbit equivalence relation of a Bernoulli action of a non-amenable group, then $E$ has a not strongly rigid representation.
\end{lem}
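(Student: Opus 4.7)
The plan is to chain together three results that are already in place in the paper: Bowen's orbital containment of $F_2$ in any Bernoulli action of a non-amenable group, the Burger--Ozawa--Thom construction of a non strongly rigid representation of $F_2$, and the transfer lemma for orbital containment that was just proved.

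First I would verify that $E$ is ergodic. This is the standard fact that a Bernoulli action of any infinite (in particular non-amenable) countable group is mixing, hence ergodic; so the orbit equivalence relation $E$ is an ergodic measure preserving equivalence relation on the base probability space. Next I would invoke Theorem \ref{thm: Bernoulli has Day property}: Bowen's result guarantees that $G$ orbitally contains $F_2$ via the given Bernoulli action, which by definition means that $E$ itself orbitally contains $F_2$ (the essentially free $F_2$-action produced by Gaboriau--Lyons/Bowen has each orbit contained in the corresponding $G$-orbit, hence in the corresponding $E$-class).

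Then I would quote the Burger--Ozawa--Thom result from \cite{BOT13} (already cited in the introduction) to the effect that $F_2$ has a non strongly rigid unitary representation. Combined with the previous step, this shows that the hypotheses of Lemma \ref{lem: non-rigid from group to orbitally containing relation} are satisfied: $E$ is ergodic and it orbitally contains a group (namely $F_2$) possessing a not strongly rigid representation. Applying that lemma gives a not strongly rigid representation of $E$, which is exactly what is required.

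The only step that needs even a moment of thought is the choice of the orbitally contained group: Bowen's theorem supplies $F_2$, and it is $F_2$ (rather than the ambient non-amenable group $G$, for which the analogous fact is not known in general) whose non strong rigidity is available off the shelf from \cite{BOT13}. Everything else is a formal application of the machinery developed in Section \ref{sec: reprpresentations of equiv} and in the preceding lemmas of this section, so there is no genuine obstacle beyond correctly assembling the chain.
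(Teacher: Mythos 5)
Your proposal is correct and follows exactly the same route as the paper: quote \cite{BOT13} for a not strongly rigid representation of $F_2$, invoke Bowen's Theorem \ref{thm: Bernoulli has Day property} for the orbital containment of $F_2$ in the Bernoulli orbit equivalence relation, and apply Lemma \ref{lem: non-rigid from group to orbitally containing relation}. The only addition is your explicit check of ergodicity of the Bernoulli action, which the paper leaves implicit but which is indeed needed for that lemma.
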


\begin{proof}
It is know that there is a unitary representation of $F_2$ on a Hilbert space $\Hi$ that is not strongly rigid (see \cite{BOT13} Corollary 4.7). 
So the lemma follows from the previous lemma, corolary and the Bowen's theorem \ref{thm: Bernoulli has Day property}.
\end{proof}

\begin{proof}[Proof of Theorem \ref{thm: not strong rigidity}]
Since existence of not strongly rigid representations passes to overgroups (Proposition 4.4.2 from \cite{BOT13}), it is enough to show that if $A$ is a non-trivial countable abelian group and $G$ is a countable non-amenable group, then $A \wr G$ has a not strongly rigid representation. Consider the Bernoulli action $G \curvearrowright (X, \mu)$ ($X = Y^G$, $Y = \hat{A}$)of $G$ from Section \ref{sec: representations of wreath}.  Let $E$ be the equivalence relation of that action. It has a not strongly rigid representation $\eta$ on a Hilbert space $\Hi$.  From this we get a fibered over $G \curvearrowright (X, \mu)$ representation $\pi$ of $G$ on the space $L_2(X, \mu, \Hi)$. As described in Section \ref{sec: representations of wreath}, we construct a representation $\pi'$ of $A \wr G$ on $L_2(X, \mu, \Hi)$. I claim that $\pi'$ is not strongly rigid. Indeed If it would be the case, we could conclude that $\eta$ is also strongly rigid, as we will show. Take a close to $\eta$ representation $\eta_1$ of $E$, going through the same steps, we get a representation $\pi'_1$ that is close to $\pi'$. Leveraging strong rigidity of $\pi'$ we get that there is a close to $Id_{L_2(X, \mu \Hi)}$ unitary operator $V$ that conjugates $\pi'$ and $\pi'_1$. By the definition of conjugating operator, we get that it commutes with $\pi'(h) = \pi'_1(h) = \iota(h)$ for every $h \in \bigoplus_G A$. So $V$ is totally fibered by Proposition \ref{prop: totally fibered operator}. It is now easy to check that its explicitly fibered form $(U_x)$ provides a conjugating operator for $\eta$ and $\eta_1$, such that $\lVert U_x - \Id_\Hi\rVert$ is small a.e.
\end{proof}

\section{Lamplighters are not strongly Ulam stable}

We say that a group $G$ is {\em strongly Ulam stable} if for every $\varepsilon > 0$ there exists $\delta > 0$ such that if $\rho$ is a unitary $\delta$-representation of $G$,  then there is a unitary representaion $\rho_1$ of $G$ such that $\lVert \rho_1 - \rho \lVert < \varepsilon$. We say that a group $G$ is {\em extensionally stronly Ulam stable} if for every $\varepsilon > 0$ there is $\delta > 0$ such that for every $\delta$ - representation $\rho$ of $G$ on a Hilber space $\Hi$ there is a Hilbert space $\Hi'$ containing $\Hi$, a unitary quasirepresentaion $\rho'$ of $G$ on $\Hi'$ whose restriction to $\Hi$ is $\rho$, and a unitary representation $\rho_1$ of $G$ on $\Hi'$ such that $\lVert \rho' - \rho_1 \rVert < \varepsilon$.

Let $G \curvearrowright (X, \mu)$ is an essentilally free measure preserving action of a countable group $G$ on a standard probability space $(X,\mu)$. We say that this action is {\em strongly Ulam stable} if for a $\varepsilon > 0$ there is $\delta > 0$ such that for any unitary $\delta$-representation $\rho$ of the action on a Hilbert space $\Hi$ there is  a unitary representation $\rho_1$ of the action on $\Hi$ such that $\lVert \rho - \rho_1 \rVert < \varepsilon$.

Let $E$ be a Borel equivalence relation on a standard probability space $(X, \mu)$. We would say that $E$ is strongly Ulam stable if for every $\varepsilon > 0$ there exists $\delta > 0$ such that for any unitary $\delta$-representation $\eta$ of $E$ on a Hilbert space $\Hi$ there is a unitary representation $\eta_1$ of $E$ on $\Hi$ such that $\lVert \eta_1 - \eta\rVert < \varepsilon$.

Now we establish connections of strong Ulam stability properties for various objects.


\begin{lem}\label{prop: ulam stability subgroup of equivalence}
Suppose $H$ is a countable group that is not extensionally strongly Ulam stable. Let $E$ be an ergodic equivalence relation on a standard probability space $(X,\mu)$ that orbitally contains an essentially free measure preserving action of $H$. Then $E$ is not strongly Ulam stable.
\end{lem}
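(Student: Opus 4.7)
The plan is to run the induced-representation construction to transport a bad quasirepresentation of $H$ up to a bad quasirepresentation of $E$. First I fix $\varepsilon_0 > 0$ witnessing that $H$ is not extensionally strongly Ulam stable: for every $\delta > 0$ there is a unitary $\delta$-representation $\rho$ of $H$ on some Hilbert space $\Hi$ such that no triple $(\Hi', \rho', \rho_1)$ with $\Hi \subset \Hi'$, $\rho'$ a unitary quasirepresentation of $H$ on $\Hi'$ restricting to $\rho$ on $\Hi$, and $\rho_1$ a unitary representation of $H$ on $\Hi'$, satisfies $\lVert \rho' - \rho_1 \rVert < \varepsilon_0$. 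I will show that $E$ fails strong Ulam stability at this same $\varepsilon_0$.

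Given any $\delta > 0$, pick such a bad $\rho$ and fix an essentially free $H$-action $H \curvearrowright (X,\mu)$ orbitally contained in $E$, with orbit equivalence relation $E_H \subset E$. The constant fibered representation $\bar\rho(h,x) := \rho(h)$ yields a unitary quasirepresentation $\eta$ of $E_H$ on fiber $\Hi$ with $\df(\eta) = \df(\rho) < \delta$. Using ergodicity of $E$, I induce it to a unitary quasirepresentation $\eta^E$ of $E$ on fiber $\Hi^{[E:E_H]}$; by the properties of the induction construction, $\df(\eta^E) = \df(\eta) < \delta$.

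Now suppose $E$ were strongly Ulam stable at $\varepsilon_0$ with some corresponding $\delta$. Then $\eta^E$ would admit a unitary representation $\tilde\eta$ of $E$ with $\lVert \eta^E - \tilde\eta \rVert < \varepsilon_0$. Set $\Hi' := L_2(X,\mu, \Hi^{[E:E_H]})$; restricting both representations to $E_H$ and passing to the $H$-picture via the fibered-representation correspondence produces a unitary quasirepresentation $\pi'$ and a unitary representation $\tilde\pi$ of $H$ on $\Hi'$, with $\lVert \pi' - \tilde\pi \rVert < \varepsilon_0$ (the $H$-norm is bounded by the ess-sup of the fiberwise differences over $E_H$, which is bounded by the corresponding ess-sup over $E$). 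Embedding $\Hi$ into $\Hi'$ as constant functions taking values in the zeroth summand $\Hi_0 \subset \Hi^{[E:E_H]}$ (using that $\mu$ is a probability measure), the computation $\pi'(h)v = \eta^E(x,hx)(v) = \eta(x,hx)(v) = \rho(h)v$ for constant $v \in \Hi_0$, which uses that $\Hi_0$ is $\eta^E|_{E_H}$-invariant with the restricted action agreeing with $\eta$, shows that $\Hi$ is $\pi'$-invariant and $\pi'|_\Hi = \rho$. Thus $(\Hi', \pi', \tilde\pi)$ certifies extensional Ulam stability of $\rho$ at $\varepsilon_0$, contradicting the choice of $\rho$.

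The main obstacle, and the motivation for the \emph{extensional} formulation of Ulam stability, is that after passing to the $H$-picture the subspace $\Hi$ is invariant for the quasirepresentation $\pi'$ but not a priori for the close unitary representation $\tilde\pi$, so one cannot simply project $\tilde\pi$ onto $\Hi$ while keeping the representation property; the extensional formulation is precisely what allows the comparison to be carried out on the larger ambient space $\Hi'$.
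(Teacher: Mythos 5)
Your proposal is correct and follows essentially the same route as the paper: tensor a bad $\delta$-representation of $H$ with the orbit-inclusion action to get a quasirepresentation of $E_H$, induce it to $E$ preserving the defect, and use the zeroth-coordinate invariant subspace together with the constant-function embedding of $\Hi$ to turn an $\varepsilon$-close representation of $E$ into a certificate of extensional Ulam stability for $\rho$, yielding the contradiction. Your closing remark correctly identifies why the \emph{extensional} formulation is the right hypothesis, which is exactly the role it plays in the paper's argument.
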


\begin{proof}
By contradiction, assume by that $E$ is strongly Ulam stable, 
We would like to deduce that $H$ is extensionally stronly Ulam stable. 

Take a $\varepsilon>0$. There is a $\delta > 0$ such that for any $\delta$-quasirepresentation of $E$ there is an $\varepsilon$-close representation of $E$.

Let $\tau$ be a $\delta$-quasirepresentation of $H$ on a Hilbert space $\Hi$. Consider $\rho =(H \curvearrowright (X, \mu))\otimes  \tau $, a fibered quasirepresentation of $H$ over the action $H$ on $(X, \mu)$ that is  given by the orbit inclusion of $H$ into $E$. The latter could be considered a qusirepresentation of the orbit equivalence relation $E_H$ of the action $H \curvearrowright (X, \mu)$. We now use the induction construction to induce a quasirepresentation $\eta$ of $E$ on the Hilbert space $\Hi^{[E : E_H]}$. We can go back to get a quasirepresentation $\rho'$ of the action  $H \curvearrowright (X, \mu)$ on the Hilbert space
 $\Hi^{[E : E_H]}$. We remind that the subspace given by the $0$'th coordinate in the latter qusirepresentation is $E_H$-invariant, and $\rho$ is a sub-quasirepresentation of $\rho'$ (by restricting to the subspace described). We again go back to get a quasirepresentation $\tau'$ of $H$ on the Hilbert space $L_2(X, \mu) \otimes  \Hi^{[E : E_H]}$. Note that the latter contains an invariant subspace $L_2(X, \mu) \otimes  \Hi $, that in turn contains an invariant subspace $\Hi$ given by the embedding $v \mapsto 1 \otimes v$. 

 Quasireprsentation $\eta$  has the same defect as $\tau$. So there is a representation $\eta_1$ such that $\lVert \eta_1 - \eta \rVert < \varepsilon$.  Now we go back to get a representation $\rho_1$ of $H \curvearrowright (X, \mu)$ on $\Hi^{[E : E_H]}$ and then a representation $\tau_1$ of $H$ on the Hilbert space $\Hi^{[E : E_H]} \otimes L_2(X, \mu)$. it is now easy to see that $\tau_1$ is a representation that is $\varepsilon$-close to $\tau'$ and the latter is an extension of $\tau$, so we proved that $N$ is extensionally strongly Ulam stable.
 \end{proof}

We borrow the next piece from the Burger, Ozawa and Thom paper \cite{BOT13}. Namely, this is a combination of Lemma 2.6 from the latter paper (that rougly states that near a finite-dimensional almost invariant subspace there is an actual invariant subspace, so that projector to the latter subspace is close in the norm topology) and of a finite-dimensional  counterexample for Ulam-stability of $F_2$ from the preprint \cite{R09} due to P. Rolli.
\begin{prop}\label{prop: free is not stable}
The free group $F_2$ is not extensionally strongly Ulam stable.
\end{prop}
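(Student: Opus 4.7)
The plan is to argue by contradiction, leveraging the two ingredients pointed out just before the statement: Rolli's finite-dimensional $\delta$-representations of $F_2$ from \cite{R09}, which stay uniformly far from every genuine unitary representation on the \emph{same} Hilbert space, and Lemma 2.6 of \cite{BOT13}, which produces an invariant subspace whose projector is close in norm to that of a finite-dimensional almost invariant subspace. The idea is to show that any hypothetical ``extensional'' unitary representation on a larger space would, via that lemma, pull back to a genuine unitary representation on the original (finite-dimensional) space that is close to Rolli's example, contradicting Rolli's result.

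More precisely, let $\varepsilon_0>0$ be the constant coming from Rolli's construction: for every $\delta>0$ there exists a finite-dimensional Hilbert space $\Hi$ and a unitary $\delta$-representation $\rho$ of $F_2$ on $\Hi$ such that $\lVert \rho - \rho_0\rVert \geq \varepsilon_0$ for every unitary representation $\rho_0$ of $F_2$ on $\Hi$. Suppose, towards a contradiction, that $F_2$ is extensionally strongly Ulam stable. Fix a small $\varepsilon>0$ (to be chosen below), and pick the corresponding $\delta>0$ from the definition of extensional stability. Take Rolli's $\rho$ on $\Hi$ with this $\delta$. By hypothesis there is an ambient Hilbert space $\Hi'\supset \Hi$, a unitary quasirepresentation $\rho'$ on $\Hi'$ with $\rho'|_\Hi=\rho$, and a unitary representation $\rho_1$ of $F_2$ on $\Hi'$ with $\lVert \rho'-\rho_1\rVert<\varepsilon$.

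The next step is to observe that $\Hi$, being exactly $\rho'$-invariant (since $\rho'(g)|_\Hi=\rho(g)$ is unitary on $\Hi$ for every $g$), is $\varepsilon$-almost invariant for $\rho_1$: indeed $\lVert (\Id-P_\Hi)\rho_1(g)P_\Hi\rVert \leq \lVert \rho_1(g)-\rho'(g)\rVert<\varepsilon$ uniformly in $g$. Since $\Hi$ is finite-dimensional, Lemma 2.6 of \cite{BOT13} applies and yields a genuinely $\rho_1$-invariant (finite-dimensional) subspace $\Hi_0\subset \Hi'$ whose projector satisfies $\lVert P_{\Hi_0}-P_\Hi\rVert < \varphi(\varepsilon)$, where $\varphi(\varepsilon)\to 0$ as $\varepsilon\to 0$. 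Let $V:\Hi\to \Hi_0$ be the partial isometry arising from the polar decomposition of $P_{\Hi_0}P_\Hi$; when $\varphi(\varepsilon)<1$, $V$ is a unitary identification of $\Hi$ with $\Hi_0$ and $\lVert V - \iota\rVert$ is controlled by $\varphi(\varepsilon)$, where $\iota:\Hi\hookrightarrow\Hi'$ is the inclusion.

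Pulling back through $V$ gives a unitary representation $\rho_0(g):=V^{-1}\rho_1(g)V$ of $F_2$ on $\Hi$. The final step is to estimate $\lVert \rho_0-\rho\rVert$: decomposing $\rho_0(g)-\rho(g) = V^{-1}\rho_1(g)V - P_\Hi\rho'(g)P_\Hi$ and using (a) $\lVert \rho_1(g)-\rho'(g)\rVert<\varepsilon$ and (b) $\lVert V-\iota\rVert \leq \varphi(\varepsilon)$ together with $\lVert \rho_1(g)\rVert=1$, we obtain $\lVert \rho_0-\rho\rVert \leq C(\varepsilon+\varphi(\varepsilon))$ for an absolute constant $C$. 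Choosing $\varepsilon$ small enough at the outset to make this bound strictly less than $\varepsilon_0$ contradicts Rolli's lower bound, completing the proof. The main conceptual obstacle is the second step — transferring the quasirepresentation data on $\Hi'$ back to $\Hi$ — which is precisely what Lemma 2.6 of \cite{BOT13} was designed to handle; the remaining estimates are routine operator-norm bookkeeping.
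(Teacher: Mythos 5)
Your proposal is correct and follows exactly the route the paper indicates: the paper gives no written proof for this proposition beyond the preceding sentence, which says to combine Rolli's finite-dimensional $\delta$-representations of $F_2$ that stay uniformly far from genuine unitary representations with Lemma 2.6 of \cite{BOT13} to pull an invariant subspace (and hence a genuine representation on the original finite-dimensional space) back from the ambient space. Your write-up is a faithful and essentially complete elaboration of that same argument, with the quantifiers in the right order.
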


\begin{cor}\label{cor: oe of bernoulli ulam non-stable}
An orbit equivalence relation of a Bernoulli action of a non-amenable group is not strongly Ulam stable.
\end{cor}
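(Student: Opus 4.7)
The proof should be essentially a one-line combination of three results already established in the paper. The plan is to invoke the Gaboriau--Lyons--Bowen orbit inclusion, the Rolli--Burger--Ozawa--Thom failure of extensional Ulam stability for $F_2$, and the induction lemma proved immediately above.

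More concretely, fix a non-amenable countable group $G$ and a non-trivial Bernoulli action $G \curvearrowright (K,\kappa)^G$, with orbit equivalence relation $E$. First I would note that $E$ is ergodic, since Bernoulli actions of infinite groups are ergodic (in fact weakly mixing). Second, by Theorem \ref{thm: Bernoulli has Day property} (Bowen's strengthening of Gaboriau--Lyons), the relation $E$ orbitally contains the free group $F_2$ on two generators; that is, there is an essentially free measure preserving action of $F_2$ on $(K,\kappa)^G$ whose orbits are almost surely contained in the $G$-orbits, hence in the $E$-classes. Third, by Proposition \ref{prop: free is not stable} (Rolli's counterexample combined with the finite-dimensional extension argument of Burger--Ozawa--Thom), $F_2$ is not extensionally strongly Ulam stable.

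All three hypotheses of Lemma \ref{prop: ulam stability subgroup of equivalence} are now in place with $H = F_2$: the relation $E$ is ergodic and orbitally contains an essentially free action of a group that is not extensionally strongly Ulam stable. The lemma therefore yields that $E$ itself is not strongly Ulam stable, which is exactly the statement of the corollary.

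I do not anticipate a genuine obstacle here: the work has all been front-loaded into Lemma \ref{prop: ulam stability subgroup of equivalence} (where the induction-of-representations construction converts a bad quasirepresentation of $F_2$ into a bad quasirepresentation of $E$ with the same defect and no near-by true representation) and into Proposition \ref{prop: free is not stable}. The only small point worth mentioning explicitly in the written proof is ergodicity of the Bernoulli action, which is needed so that the index $[E : E_{F_2}]_{(X,\mu)}$ appearing in the induction construction is well-defined.
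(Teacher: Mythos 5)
Your proposal is correct and matches the paper's intended argument exactly: the corollary is meant to follow by combining Theorem \ref{thm: Bernoulli has Day property} (orbit inclusion of $F_2$ into the Bernoulli orbit equivalence relation), Proposition \ref{prop: free is not stable} ($F_2$ is not extensionally strongly Ulam stable), and Lemma \ref{prop: ulam stability subgroup of equivalence}. Your explicit remark on ergodicity of the Bernoulli action is a welcome addition, since the paper leaves that hypothesis unmentioned.
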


Surprisingly, the proof of the next proposition will leverage deformation rigidity of amenable groups. Namely we will use the following
\begin{theor}[\cite{BOT13}, Theorem 4.3]
All countable unitarizable groups (in particular, all countable amenable) are deformation rigid.
\end{theor}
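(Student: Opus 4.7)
The plan is to reduce deformation rigidity to the existence of a suitable invariant positive operator on a doubled Hilbert space. Fix $\varepsilon>0$ and two unitary representations $\rho_1,\rho_2$ of $G$ on $\Hi$ with $\sup_{g\in G}\lVert\rho_1(g)-\rho_2(g)\rVert<\delta$. I first form the auxiliary representation $\tilde\rho\colon G\to\Aut(\Hi\oplus\Hi)$ given by
\[
\tilde\rho(g)\;=\;\begin{pmatrix} \rho_1(g) & \rho_1(g)-\rho_2(g)\\ 0 & \rho_2(g) \end{pmatrix}.
\]
A direct matrix multiplication confirms the homomorphism law via the cocycle identity $\rho_1(gh)-\rho_2(gh)=\rho_1(g)(\rho_1(h)-\rho_2(h))+(\rho_1(g)-\rho_2(g))\rho_2(h)$, and $\lVert\tilde\rho(g)\rVert\le 1+\delta$ uniformly, so $\tilde\rho$ is uniformly bounded.

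Unitarizability of $G$ now furnishes a positive invertible operator $B$ on $\Hi\oplus\Hi$ with $\tilde\rho(g)^*B\tilde\rho(g)=B$ for every $g\in G$. The subspace $V=\Hi\oplus 0$ is $\tilde\rho$-invariant thanks to the upper-triangular shape, so its $B$-orthogonal complement $W$ is also $\tilde\rho$-invariant. Writing $B=\bigl(\begin{smallmatrix}B_{11}&B_{12}\\B_{12}^*&B_{22}\end{smallmatrix}\bigr)$, with $B_{11}\ge cI$ invertible since $B\ge cI$, one reads off $W=\{(-B_{11}^{-1}B_{12}y,\,y):y\in\Hi\}$. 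Setting $A:=-B_{11}^{-1}B_{12}$, invariance of the graph $W$ forces the identity $A\rho_2(g)-\rho_1(g)A=\rho_1(g)-\rho_2(g)$, equivalently $(A+I)\rho_2(g)=\rho_1(g)(A+I)$. Thus $A+I$ intertwines $\rho_2$ with $\rho_1$, and when $\lVert A\rVert<1$ its polar decomposition $A+I=U\lvert A+I\rvert$ produces a unitary intertwiner $U$, since $\lvert A+I\rvert$ commutes with $\rho_2$.

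The substantive point is quantitative: I need $\lVert A\rVert$ (hence $\lVert U-I\rVert$) to be controlled by $\delta$, which forces me to choose $B$ close to $I$. To do this, observe that $\tilde\rho$ is a norm-$O(\delta)$ perturbation of the unitary representation $\rho_1\oplus\rho_2$, so $\tilde\rho(g)^*\tilde\rho(g)=I+O(\delta)$ uniformly in $g$. One seeks $B$ inside the $\sigma$-weakly closed convex hull of $\{\tilde\rho(g)^*\tilde\rho(g):g\in G\}$, which sits inside an $O(\delta)$-ball around $I$; any $G$-fixed point of the natural action $X\mapsto\tilde\rho(g)^*X\tilde\rho(g)$ within this convex set simultaneously unitarizes $\tilde\rho$ and is $O(\delta)$-close to $I$, yielding $\lVert A\rVert=O(\delta)$. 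The main obstacle is exactly the production of such a fixed point: for amenable $G$ a left-invariant mean delivers it immediately, but for general unitarizable $G$ one must invoke either Pisier's similarity-degree estimate (which bounds the similarity constant uniformly in terms of $\lVert\tilde\rho\rVert$ and hence, near a unitary representation, provides $B$ near $I$) or the bounded-cocycle/coboundary reformulation of unitarizability applied to the small cocycle $g\mapsto\tilde\rho(g)^*\tilde\rho(g)-I$. Once this is secured, the remaining estimates $\lVert A\rVert=O(\delta)$ and $\lVert U-I\rVert=O(\delta)$ are routine perturbative bookkeeping, and taking $\delta$ small enough finishes the proof.
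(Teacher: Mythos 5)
The paper itself does not prove this statement---it is imported verbatim from Burger--Ozawa--Thom---so your attempt has to be measured against the known proof there. Your skeleton is the right one and is in fact equivalent to theirs: the upper-triangular representation $\tilde\rho$ is the standard linearization of the derivation $D(g)=\rho_1(g)-\rho_2(g)$ (which satisfies $D(gh)=\rho_1(g)D(h)+D(g)\rho_2(h)$), the $B$-orthogonal complement of the invariant subspace $\Hi\oplus 0$ correctly yields the intertwining identity $(A+I)\rho_2(g)=\rho_1(g)(A+I)$, and the polar-decomposition step is sound because $\lvert A+I\rvert$ commutes with $\rho_2$. For \emph{amenable} $G$ your argument is complete: averaging the orbit of $I$ under $X\mapsto\tilde\rho(g)^*X\tilde\rho(g)$ against an invariant mean produces a fixed point in the $\sigma$-weakly closed convex hull of $\{\tilde\rho(g)^*\tilde\rho(g)\}$, which sits in an $O(\delta)$-ball around $I$; and the amenable case is all the present paper actually uses, since the theorem is applied only to the abelian group $\bigoplus_G A$.

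The gap is in the general unitarizable case, exactly where you flag it. Unitarizability hands you \emph{some} invariant $B$, with no control on $\lVert B-I\rVert$ and no reason for it to lie in the convex hull of the orbit of $I$; without amenability there is no fixed-point theorem putting one there. Neither substitute you name is shown to work, and the first is wrong as stated: Pisier's similarity-degree estimate has the form $\lvert\pi\rvert_{\mathrm{sim}}\le K\lvert\pi\rvert^{d}$, so as $\lvert\tilde\rho\rvert\to 1$ it only bounds the similarity constant by $K$, it does not force it to tend to $1$, hence does not produce $B$ near $I$. What actually closes the gap (and is how BOT argue) is the quantitative derivation characterization of unitarizability: every bounded derivation with respect to a pair of unitary representations is inner, \emph{and} the implementing operator can be chosen with $\lVert T\rVert\le C\,\sup_g\lVert D(g)\rVert$ for a constant $C=C(G)$ uniform over all pairs of unitary representations on all Hilbert spaces. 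Surjectivity of $T\mapsto(\rho_1(\cdot)T-T\rho_2(\cdot))$ plus the open mapping theorem gives such a $C$ for a fixed pair; uniformity over all pairs needs an extra step (direct-summing a putative sequence of counterexamples to manufacture a bounded non-inner derivation). With $\lVert T\rVert\le C\delta$ in hand, $\rho_1(g)(I-T)=(I-T)\rho_2(g)$ and your polar-decomposition bookkeeping finish the proof. As written, the non-amenable unitarizable case does not go through until you state and prove that uniform estimate.
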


\begin{prop}\label{prop: ulam stability wreath product and equivalence}
Let $A$ be a non-trivial finite or countable abelian group and $G$ be any countable group. 
If $A \wr G$ is strongly Ulam stable, then there is a Bernoulli action of $G$ whose orbit equivalence relation $E$ is strongly Ulam stable. 
\end{prop}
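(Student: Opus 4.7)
The plan is to take the Bernoulli action $G \curvearrowright (X,\mu)$ with $X = Y^G$, $Y = \hat{A}$ from Section \ref{sec: representations of wreath}, let $E$ denote its orbit equivalence relation, and transfer strong Ulam stability from $A \wr G$ to $E$ via the constructions of that section. The strategy is to lift a quasirepresentation of $E$ to a quasirepresentation of $A \wr G$ through Proposition \ref{prop: fibered to wreath}, apply the stability hypothesis to get a nearby unitary representation of $A \wr G$, and descend back after correcting the action of $N = \bigoplus_G A$ by means of deformation rigidity of amenable groups.

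Given a unitary $\delta$-representation $\eta$ of $E$ on a Hilbert space $\Hi$, identify it with a fibered unitary quasirepresentation $\pi$ of $G$ on $L_2(X,\mu,\Hi)$ as in Section \ref{sec: reprpresentations of equiv}. By Proposition \ref{prop: fibered to wreath}, $\pi$ extends to a unitary quasirepresentation $\pi'$ of $A \wr G$ satisfying $\pi' \vert_N = \iota$ and $\df(\pi') = \df(\pi) = \df(\eta) \leq \delta$. The assumed strong Ulam stability of $A \wr G$, applied with a sufficiently small tolerance, then yields a genuine unitary representation $\pi_1$ of $A \wr G$ within some small $\varepsilon'$ of $\pi'$ in the uniform metric.

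The principal obstacle is that $\pi_1 \vert_N$ need not coincide with $\iota$, so $\pi_1 \vert_G$ is a priori not fibered over the Bernoulli action and cannot yet be interpreted as a representation of $E$. This is circumvented because $N = \bigoplus_G A$ is countable abelian, hence amenable, hence unitarizable, hence deformation rigid by Theorem 4.3 of \cite{BOT13}. Since the two unitary representations $\pi_1 \vert_N$ and $\iota$ are within $\varepsilon'$ uniformly, taking $\varepsilon'$ sufficiently small supplies a unitary $U$ with $\lVert U - \Id\rVert$ as small as we wish and $U \pi_1(h) U^{-1} = \iota(h)$ for every $h \in N$. Replacing $\pi_1$ by $U \pi_1 U^{-1}$ increases its uniform distance to $\pi'$ by at most $2\lVert U - \Id\rVert$ and enforces $\pi_1 \vert_N = \iota$; Proposition \ref{prop: wreath to fibered} then says that $\pi_1 \vert_G$ is fibered over the Bernoulli action, and the correspondence in Section \ref{sec: reprpresentations of equiv} converts it into a unitary representation $\eta_1$ of $E$.

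To finish, note that the (post-conjugation) $\pi_1$ is itself the $A \wr G$-extension of $\pi_1 \vert_G$ in the sense of Proposition \ref{prop: fibered to wreath}, so item (5) of that proposition gives $\lVert \pi_1 - \pi'\rVert = \lVert \pi_1 \vert_G - \pi\rVert = \lVert \eta_1 - \eta\rVert$, the last equality coming from the identification of fibered representations of $G$ with representations of $E$. Since both $\varepsilon'$ and $\lVert U - \Id\rVert$ can be made arbitrarily small by shrinking $\delta$ and invoking the deformation rigidity quantifier for $N$, we conclude that $E$ is strongly Ulam stable.
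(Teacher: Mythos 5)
Your proposal is correct and follows essentially the same route as the paper's proof: extend the lifted quasirepresentation of $E$ to $A\wr G$ via Proposition \ref{prop: fibered to wreath}, apply strong Ulam stability, use deformation rigidity of the amenable group $\bigoplus_G A$ to conjugate the resulting representation so that its restriction to $N$ is again $\iota$, and descend via Proposition \ref{prop: wreath to fibered}. The quantifier bookkeeping (choosing the rigidity tolerance before the stability tolerance) matches the paper's choice of $\delta_1,\delta_2$.
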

\begin{proof}

Denote $\Gamma = A \wr G$. We use the construction of the Bernoulli action from section \ref{sec: representations of wreath}, so $E$ is the equivalence relation of the said action.

Take $\varepsilon > 0$, we want to prove that there is a $\delta > 0$ such that for every unitary $\delta$-representation $\eta$ of $E$ on a Hilbert space $\Hi$ there is a representation $\eta_1$ of $E$  such that $\lVert \eta_1 - \eta\rVert < \varepsilon$.

We use deformation rigidity of $N = \bigoplus_{G} A$ (it is abelian and hence amenable) to find $\delta_1$ such that if $\chi_1$ and $\chi_2$ are two representations  of $N$ with $\lVert \chi_1 - \chi_2\rVert < \delta_1$ then there is a unitary conjugating operator $W$ (i.e. $\chi_2(h) = W^{-1} \chi_1(h) W$ for every $h \in N$) such that $\lVert \Id - W\rVert < \varepsilon/4$. We use strong Ulam stability of $\Gamma$ to find a $\delta_2$ such that for every  $\delta_2$-representaion $\tau$ of $\Gamma$ there is a unitary representation $\tau_1$ such that $\lVert \tau - \tau_1\rVert < \min(\varepsilon/4, \delta_1)$.

Now, take $\delta > 0$ smaller than $\delta_1$, $\delta_2$, $\varepsilon/4$. Take any unitary $\delta$-representation $\eta$ of $E$ on $\Hi$. 
First note that $\eta$ corresponds to a $\delta$-representaion $\bar{\rho}$ of the action $G \curvearrowright (X,\mu)$ on $\Hi$. We then induce a $\delta$-representation $\rho$ of $G$ on $L_2(X,\mu, \Hi)$. 
From the latter we can induce a $\delta$-representation $\tau$ of $A \wr G$ on $L_2(X, \mu, \Hi)$ as we described earlier in Proposition \ref{prop: fibered to wreath}. 
By construction, there is a unitary representation $\tau_1$ of $\Gamma$ such that $\lVert \tau - \tau_1\rVert < \min(\varepsilon/4, \delta_1)$.  Now we denote $\chi_1 = \tau_1 \vert_N$. Since $\lVert \chi_1 - \iota \rVert < \delta_1$ (we remind that $\iota$ is the natural embedding of $L_\infty(X, \mu)$ into $\mathcal{B}(\Hi)$, and $N$ is identified with a subset of $L_\infty(X, \mu)$ by Pontryagin duality), there is a unitary conjugating operator $W$ with $\lVert W - \Id\rVert < \varepsilon/4$, such that $W^{-1} \chi_1(h) W = \iota$ for every $h \in N$. We now define a representation $\tau_ 2$ of $\Gamma$ by $\tau_2(\gamma) = W^{-1} \tau_1(\gamma) W$. it is easy to check that $\lVert \tau_2 - \tau \rVert < \varepsilon$ and that $\tau_2 \vert_N = \iota$. From the latter we may deduce, by Proposition \ref{prop: wreath to fibered}, that $\tau_2 \vert_G$ is a representation of $G$ that is fibered over the action $G \curvearrowright(X, \mu)$. From the fibered representation  $\tau_2 \vert_G$ we may construct a representation $\eta_2$ of $E$ and it is easy to check that $\lvert \eta_1 - \eta \rvert < \varepsilon$ (for this we use the fact that the norm of a fibered operator is the essential supremum of the norms of its fiber-components).

\end{proof}

\begin{proof}[Proof of Theorem \ref{thm: ulam non-stability}]
Assume by contradiction that $A \wr G$ is strongly Ulam-stable for a finite or countable abelian group $A$ and a non-amenable  countable $G$. By previous proposition, there is a non-trivial Bernoulli action of $G$ such that its orbit equivalence relation is strongly Ulam-stable, contrary to Corolary \ref{cor: oe of bernoulli ulam non-stable}.

\end{proof}

\section{Dense subgroups of full groups}

We remind that the {\em full group} $\llbracket E\rrbracket$ of an equivalence relation $E$ on a standard probability space $(X, \mu)$ is the group of all measure preserving automorphisms of $(X,\mu)$ whose graphs are subsets of $E$. The {\em uniform distance} between two elements $T_1, T_2$ of $\llbracket E\rrbracket$ is defined as the measure of the set where $T_1$ and $T_2$ differ; the topology defined by this metric is called the {\em uniform topology}.  A thorough treatment of full groups could be found in book \cite{Ke10}. 

A representation $\eta$ of equivalence relation $E$ naturally defines a continuous representation $\tilde{\eta}: \llbracket E\rrbracket \to \Aut(L_2(X, \mu, \Hi))$ of the full group of equivalence relation $E$, assuming the uniform topology on the full group and the strong operator topology on $\Aut(L_2(X, \mu, \Hi))$. This representation is given by the formula 
\[
\big( {(\tilde{\eta}(T))(f)} \big)(x) = \big(\eta(T^{-1}x, x)\big)(f(T^{-1}x)),
\]
for $x \in X$, $f \in L_2(X, \mu, \Hi)$, $T \in \llbracket E\rrbracket$.
We would say that a countable Borel measure preserving equivalence relation $E$ on a standard probability space $(X,\mu)$ orbitally contains a group $N$ if there is an essetially free action of $N$ on $(X,\mu)$ such that the $N$-orbit of almost every point is contained in the $E$-equivalence class of that point.

\begin{proof}[Proof of Theorem \ref{thm: dense subgroup}, the part concerning unitarizability]
We remind that we have an equivalence relation $E$ on a standard probability space $(X,\mu)$ that measurable contains a non-unitarizable group $N$. We want to prove that any countable dense subgoup $G$ of $\llbracket E\rrbracket$ is non-unitarizable. Assuming the contrary, we will show that $N$ is unitarizable. Let $\tau: N \to \Aut(\Hi)$ be a representation of $N$. This representation induces the fibered representation $\bar{\rho}$ of $N$, the tensor product $(N \curvearrowright X) \otimes \tau$. There is a corresponding representation $\eta$ of the orbit equivalence relation $E_N$ of the $N$-action. We induce a representation $\eta^E$ of equivalence realtion $E$. 
Consider the representation $\widetilde{\eta^{E}} : \llbracket E\rrbracket \to \Aut(L_2(X, \mu, \Hi^I))$, where $I = [0, [E:E_N]_{X,\mu})$ of the full group $\llbracket E \rrbracket$ obtained from $\eta^{E}$.
Since $G$ is unitarizable, there is a new scalar product on $L_2(X,\mu, \Hi^I)$ such that $\widetilde{\eta^{E}}(g)$ becomes unitary for every $g \in G$. The strong operator topology did not change since the new scalar product defines the same topology. Also, the unitaries are closed in the strong operator topology among invertible operators. Hence we get that $\widetilde{\eta^{E}}(\gamma)$ is unitary for any $\gamma \in \llbracket E\rrbracket$. Now, the orbit inclusion of $N$ into $E$ defines an embedding $\lambda: N \to \llbracket E\rrbracket$. It is easy to note that $\lambda \circ \widetilde{\eta^{E}}$ is a representation of $N$ that contains $\tau$ as a subrepresentation. But we showed that $\lambda \circ \widetilde{\eta^{E}}$ is unitarizable, so $\tau$ is unitarizable as well.
\end{proof}

\begin{proof}[Proof of Theorem \ref{thm: dense subgroup}, the part concerning rigidity]
	
Let $E$ be an equivalence relation on a standard probability space $(X, \mu)$ that orbitally contains a group $H$. Let $\tau$ be a non strongly rigid representation of $H$ on a Hilbert space $\Hi$. Denote $E_H$ the 

Consider the fibered  representation $\pi$   of $H$ on $L_2(X, \mu, \Hi)$  given by the tensor-product construction of $\tau$ with the action of $H$. Let $\bp$ be the corresponding representation of the action $H \curvearrowright (X, \mu)$. Denote $\eta$ the representation of equivalence relation $E_H$ obtained from the latter representation of the action $H \curvearrowright (X, \mu)$.  We then apply the induction construction to get a representation $\eta^E$ of equivalence relation $E$ on the space $\Hi^I$, where $I = [0, E : E_H)$. We then get a continuous (the uniform metric to the strong operator topology) representation $\widetilde{\eta^E}$ of $\llbracket E\rrbracket$ on $\Hi^I$. 
Lets identify $G$ with a dense subgroup of $\llbracket E\rrbracket$.
I claim of that representation of $G$ obtained by restriction, is not strongly rigid. Assuming the contrary, we may deduce that $\tau$ is strongly rigid. Indeed, Lets $\tau_1$ be a close to $\tau$ representation. We go through the same steps, and get a representation $\widetilde{\eta_1^E}$ of $E$. Let $\lambda$ be a natural embedding of $H$ into $\llbracket E\rrbracket$ (which we get from the action $H \curvearrowright (X, \mu)$). Notice that representation $\lambda \circ \widetilde{\eta^E}$ has an
Now, using rigidity of $\widetilde{\eta^E}\vert_G$, we get a conjugating unitary operator $U$ such that $U \widetilde{\eta^E}(g) = \widetilde{\eta_1^E}(g) U$ for all $g \in G$. Since $G$ is dense in $G$ and representaions in question are continuous, the latter equality holds for all $g \in \llbracket E\rrbracket$. Notice that $\tau$ is a subrepesentation of $\widetilde{\eta} \circ \lambda$ (the corresponding subspace is given by the composition of natural embeddings $\Hi \to L_2(X, \mu, \Hi) \to L_2(X, \mu, \Hi)^I$, denote $P_\Hi$ the projector from $L_2(X, \mu, \Hi)^I \simeq L_2(X, \mu, \Hi^I)$ to that copy of $\Hi$). Also, $\tau_1$ is a subrepresentation of $\widetilde{\eta_1^E}$, with the same invariant subspace. Now its not hard to see that the sign part of the polar decomposition of $P_\Hi U P_\Hi\vert_\Hi$ would be a conjugating unitary operator for representation $\tau$ and $\tau_1$, that is close to $\Id_\Hi$.

\end{proof}

\end{document}